\newtheorem{theorem}{Theorem}[section]
\newtheorem{lemma}[theorem]{Lemma}
\newtheorem{proposition}[theorem]{Proposition}
\theoremstyle{definition}
\newtheorem{notation}[theorem]{Notation}
\theoremstyle{remark}
\newtheorem{remark}[theorem]{Remark}
\numberwithin{equation}{section}
\newcommand{\eps}{\varepsilon}
\newcommand{\pow}{\frac{1}{\alpha}}
 \DeclarePairedDelimiter{\ceil}{\lceil}{\rceil} 
\begin{document} 
\title{Mixing rates and limit theorems for random intermittent maps}
\author{Wael Bahsoun$^{\dagger}$}
\address{Department of Mathematical Sciences, Loughborough University,
Loughborough, Leicestershire, LE11 3TU, UK}
\email{$\dagger$ W.Bahsoun@lboro.ac.uk}
\author{Christopher Bose$^{*}$}
\address{Department of Mathematics and Statistics, University of Victoria,
   PO BOX 3045 STN CSC, Victoria, B.C., V8W 3R4, Canada}
\email{$*$ cbose@uvic.ca}
\thanks{The second author is supported by a research grant from the National Sciences and Engineering Research Council of Canada.}
\subjclass{Primary 37A05, 37E05}
\date{\today}
\keywords{Interval maps with a neutral fixed point, intermittency, random dynamical systems, decay of correlations, Central Limit Theorem, stable laws}
\begin{abstract}
We study random transformations built from intermittent maps on the unit interval that share a common neutral fixed point.  We focus mainly on random selections of Pomeu-Manneville-type maps $T_\alpha$ using the full parameter range $0< \alpha < \infty$, in general.  We derive a number of results around a common theme that illustrates in detail how the constituent map that is fastest mixing (i.e.\ smallest $\alpha$) combined with details of the randomizing process, determines the asymptotic properties of the random transformation.   Our key result (Theorem \ref{thm:asymptotics}) establishes sharp estimates on the position of return time intervals for the \emph{quenched} dynamics.   The main applications of this estimate are to \textit{limit laws} (in particular, CLT and stable laws, depending on the parameters chosen in the range
$0 < \alpha < 1$) for the associated skew product; these are detailed in Theorem \ref{thm:limits}.  Since our estimates in Theorem \ref{thm:asymptotics} also hold for $1 \leq \alpha < \infty$ we study a piecewise affine version of our random transformations, prove existence of an infinite ($\sigma-$finite) invariant measure and study the corresponding correlation asymptotics.  To the best of our knowledge, this latter kind of result is completely new in the setting of random transformations. 
\end{abstract}
\maketitle
\pagestyle{myheadings}
\markboth{Mixing rates and limit theorems}{W.\, Bahsoun, C.\, Bose}

\section{Introduction}

In recent years, a lot of attention has been given to examples of nonuniformly expanding (or nonuniformly hyperbolic) maps with neutral fixed points. It is well known that such models can exhibit a range of nonstandard dynamical/probabilistic behavior; they may be mixing, but display subexponential decay of correlations for H\"older observables, for example. Limit theorems such as CLT and stable laws can be derived within various classes depending on the \emph{strength} of the intermittency around the fixed point.
  
The purpose of this paper is to investigate similar questions for \emph{random transformations} whose constituent maps are drawn from an appropriate nonuniformly expanding family.  In particular, we aim to understand how behavior of the random transformation depends on properties of the maps and the randomizing process.  A brief synopsis of our findings is as follows:  At the level of existence (or non-existence) of a finite invariant measure and the \emph{rate} of correlation decay for sufficiently regular observables, the random dynamics are completely determined by the map with fastest relaxation, independent of the randomization.  The same also holds for the dynamical CLT when the correlation decay is strong enough to be summable.   However, we find the randomizing process begins to play an explicit role at the next finer level of analysis, for example, in sharp correlation asymptotics for regular observables supported away from the fixed points, and in limit theorems taking the form of stable laws for the associated skew product. Overall, this analysis gives a coherent picture that is consistent with our intuition about how randomness interacts with the intermittency.  

We will work in the following concrete setting.  Let $(I,\mathfrak B(I),m)$ denote the measure space consisting of the unit interval $I=[0,1]$ with Borel $\sigma-$algebra $\mathfrak{B}(I)$ and $m=$ Lebesgue measure
on $\mathfrak{B}(I)$.
The first part of this paper will concentrate on randomized one-dimensional maps of Pomeau-Manneville type \cite{PM}. 
A well-known, simplified version of the PM maps is the family of so-called Liverani-Saussol-Vaienti maps \cite{LSV}. Such systems have attracted the attention of both mathematicians and physicists (see \cite{LS} for a recent work in this area).\\

To set our notation, given a parameter value
$0<\alpha< \infty$, define
$$T_{\alpha}(x)=\begin{cases}
       x(1+2^{\alpha}x^{\alpha}) \quad x\in[0,\frac{1}{2}]\\
       2x-1 \quad \quad \quad x\in(\frac{1}{2},1].
       \end{cases}
$$

When $\alpha=0$,  $T_\alpha$ is the doubling map. For $\alpha >0$, $x=0$ is a neutral fixed point for the map $T_\alpha$ which is consequently a nonuniformly expanding, piecewise $C^\infty$, monotone map of the interval (on two pieces).

It is well-known that $T_\alpha$ admits a finite ACIM with density 
$h_\alpha = O(x^{-\frac{1}{\alpha}})$ for $x$ near zero when $0<\alpha<1$ (see Liverani, Saussol, Vaienti \cite{LSV}, for example) and an infinite, $\sigma-$finite ACIM with similar asymptotic near zero when $1 \leq \alpha< \infty$ (see Pianigiani \cite{Pi} for this range).
In fact, the argument in \cite{LSV} shows that for $0<\alpha<1$, the density $h_\alpha$ is locally Lipschitz on $(0,1]$ as well as being continuous and integrable.

Now fix two parameters $0<\alpha < \beta < \infty$ and consider the {\bf random LSV transformation} defined as follows. 
 $$T=\{T_{\alpha}(x),T_{\beta}(x); p_{1}, p_{2}\}, \text{ where }$$
$p_1,p_2>0$ and $p_2=1-p_1.$ The random transformation $T$ maybe viewed as a Markov process with transition function 
$$\mathbb P(x,A)=p_{1}{\bf 1}_{A}(T_{\alpha}(x))+p_{2}{\bf 1}_{A}(T_{\beta}(x))$$ 
of a point $x\in I$ into a set $A\in \mathfrak B(I)$. The transition function induces an operator, $E_T$, acting on measures; i.e., if $\mu$ is a measure on $(I, \mathfrak B)$,
$$(E_T\mu)(A)=p_1\mu(T_{\alpha}^{-1}(A)) + p_2\mu(T_{\beta}^{-1}(A)).$$
A measure $\mu$ is said to be $T$-invariant if $$\mu=E_T\mu,$$
and $\mu$ is said to be an absolutely continuous invariant measure if $d\mu=f^*dm$, $f^*\geq 0$.
To study absolutely continuous invariant measures, we introduce the transfer operator (Perron-Frobenius) of
the random transformation $T$:
$$(P_{T}f)(x)=p_1P_{T_{\alpha}}\left(f\right)(x)+p_2P_{T_{\beta}}\left(f\right)(x),$$
where $P_{T_{\alpha}}, P_{T_{\beta}}$ are the transfer operators associated with the $T_{\alpha}, T_{\beta}$ respectively. Then it is a straight-forward computation to show that a measure $\mu=f^*\cdot m$ is an absolutely continuous $T-$invariant measure if
$$P_Tf^*=f^*.$$

\subsection{A skew product representation}

Define the skew product transformation $S(x,\omega):I\times I\to I\times I$ by 
\begin{equation}\label{skew}
S(x,\omega)=(T_{\alpha(\omega)},\varphi(\omega)),
\end{equation}
where
\begin{equation}\label{alpha}
\alpha(\omega)=\begin{cases}
       \alpha \quad ,\omega\in[0,p_1)\\
       \beta \quad ,\omega\in[p_1,1]
       \end{cases}
;\quad\quad
\varphi(\omega)=\begin{cases}
       \frac{\omega}{p_1} \quad \quad ,\omega\in[0,p_1)\\
       \frac{\omega-p_1}{p_2} \quad ,\omega\in[p_1,1]
       \end{cases}.
\end{equation}       
The skew product representation in \eqref{skew} is a version\footnote{The results obtained in Bahsoun, Bose and Quas \cite{BBQ} are valid for any class of measurable non-singular maps on $\mathbb R^q$, without any regularity assumptions. Moreover in  \cite{BBQ}, the probability distribution on the noise space is allowed to be place-dependent.} of the skew product representation which was studied in Bahsoun, Bose and Quas \cite{BBQ}. We denote the transfer operator associated with $S$ by $\mathcal L_S$: for $g\in L^1(I\times I)$ and measurable $A \subseteq I \times I$,
$$
  \int_{S^{-1}A} \, g\, d(m \times m)(x,\omega) = \int_A \, \mathcal L_S g \,
  d(m \times m)(x,\omega).
$$
Then a measure $\nu$, such that $d\nu=g^*d(m\times m)$ and $\int_{I\times I}g^*d(m\times m)=1$, is an absolutely continuous $S$-invariant probability measure if 
$$\mathcal L_S g^*=g^*.$$
In \cite{BBQ}, Theorem 5.2 it is shown that if  $g \in L^1(I\times I)$ and $\mathcal L_S g =\lambda g$ with $|\lambda|=1$, then 
$$g(x,\omega)=f(x)\cdot {\bf 1}(\omega)$$
and $P_T f = \lambda f$, 
 that is, $g$ depends only on the spatial coordinate $x$ and 
 as a function of $x$ only, is also an eigenfunction for $P_T$. Setting $\lambda =1$ we obtain
$\mathcal L_S g^*=g^*$ if and only if  $g^*(x, \omega) = f^*(x)$ with  $P_T f^* = f^*$.  Consequently there is a one to one correspondence between invariant densities for $S$ and invariant densities for $T$.   Moreover, dynamical properties such as ergodicity, number of ergodic components or weak-mixing, properties that are determined by peripheral eigenfunctions,  can be determined via either system.

Our skew product construction is similar to a model constructed by Gou\"ezel \cite{G2}, however in that paper, the skew product samples \emph{continuously} from the space of LSV maps, whereas we sample discretely. This allows us to simplify the analysis and extend the range of parameters in which we can complete the analysis, compared to \cite{G2}. A more detailed discusion and comparison between the two models can be found in Bahsoun, Bose and Duan \cite{BBD}.

\subsection{Inducing for the skew representation $S$}\label{sec:inducing}

The method of inducing (equivalently, Markov extensions or Young towers) gives a systematic way to study maps like $T_\alpha$ having localized singularities, for example, as detailed in Young \cite{Y}.  We will begin by doing essentially the same thing with our skew product $S$, inducing on the right half of the square $\Delta_0:=(1/2,1] \times [0,1]$.  

Set
$$T^n_\omega(x):=T_{\alpha(\varphi^{n-1}\omega)}\circ...\circ T_{\alpha(\varphi\omega)}\circ T_{\alpha(\omega)}(x).$$
Then
$$S^n(x,\omega)=(T^n_\omega(x),\varphi^{n}(\omega)).$$
Also, set
$$P^n_\omega:=p_{\alpha(\varphi^{n-1}\omega)}\times...\times p_{\alpha(\varphi\omega)}\times p_{\alpha(\omega)},$$
where $ p_{\alpha(\omega)}=p_1,$ for $\alpha(\omega)=\alpha$ and $ p_{\alpha(\omega)}=p_2,$ for $\alpha(\omega)=\beta.$ 
We define two sequences of random points $\{x_n(\omega)\}$ and $\{x'_n(\omega)\}$ in $[0,1]$ which will be used to construct the first return map of $S$ to $\Delta_0$. The points $x_n(\omega)$ lie in $(0, 1/2]$. Set
\begin{equation}\label{def_backorbit}
x_1(\omega)\equiv\frac{1}{2}\text{ and } x_n(\omega)=T^{-1}_{\alpha(\omega)}\mid_{[0,\frac{1}{2}]}[x_{n-1}(\varphi\omega)], n\geq 2.
\end{equation}
Observe that with this notation, 
$$S(x_n(\omega),\omega) 
= (T_{\alpha(\omega)}(x_n(\omega)), \varphi \omega) = (x_{n-1}(\varphi \omega), \varphi \omega). $$
The points $x'_n(\omega)$  lie in $(\frac{1}{2},1]$, defined by
\begin{equation}\label{def_backorbit_prime}
x'_0(\omega)\equiv 1,x'_1(\omega)\equiv\frac{3}{4} \text{ and } x'_n(\omega)=\frac{x_n(\varphi \omega)+1}{2},n\geq 2,
\end{equation}
that is, the $x'_n(\omega)$ are preimages of the $x_n(\varphi \omega)$ in  $(\frac{1}{2},1]$ under the right branch $2x-1.$

\subsection{First return map of $S$ to $\Delta_0$}
Let $R: \Delta_0\to \mathbb{Z}^+$ be the first return time function and $S^R : \Delta_0\to \Delta_0$ be the return map. For $n\geq 1$, set $I_n(\omega):=(x_{n+1}(\omega),x_{n}(\omega)]$ and $J_n(\omega):=(x'_{n}(\omega),x'_{n-1}(\omega)]$.
Observe that every point in $J_n(\omega)$ will return to $(\frac{1}{2},1]$ in $n$ steps 
under the random iteration $T^n_{\omega}$ as follows:
$$J_n(\omega)\rightarrow I_{n-1}(\varphi \omega)\rightarrow I_{n-2}(\varphi^2 \omega)\rightarrow...\rightarrow I_{1}(\varphi^{n-1}\omega)\rightarrow(\frac{1}{2},1].$$
Next, we partition $\Delta_0$ into subsets $\Delta_{0,i}$, $i=1,2,\dots$ where
$$\Delta_{0,i}:= \{ (x,\omega) ~|~ x \in J_i(\omega) \}$$
and then further partition each $\Delta_{0,i}$ into subsets $\Delta^j_{0,i}, \, j=1, 2, \dots 2^i$ according to the $2^i$ possible
values  of the
string $\alpha(\omega), \alpha(\varphi \omega),  \dots \alpha(\varphi^{i-1}\omega)$.
Defined this way,  $S^i$ maps each subset $\Delta^j_{0,i}$ bijectively to $\Delta_0$. 

For example, in the case $i=2,$ there are four sets $\Delta^j_{0,2}$ on which 
$R=2$ and such that $S^R$ maps each set bijectively to $\Delta_0$:
$$\Delta^j_{0,2}=\begin{cases}
       J_2(\omega)\times [0,p^2_1),     & \mbox{if\text{ } } j=1,\\
       J_2(\omega)\times [p^2_1,p_1),     & \mbox{if\text{ } } j=2,\\
       J_2(\omega)\times [p_1,p_1+p_1\cdot p_2),      & \mbox{if\text{ } } j=3,\\
       J_2(\omega)\times [p_1+p_1\cdot p_2,1),      & \mbox{if\text{ } } j=4.\\
       \end{cases}$$
       
To summarize, 
$$\Delta_{0,i}=\bigcup\limits_{j=1}\limits^{2^i}\Delta^j_{0,i}$$
and 
$$\Delta_0 = \bigcup\limits_{i=1}\limits^{\infty}\bigcup\limits_{j=1}\limits^{2^i}\Delta^j_{0,i},$$
where, for every $i$ and  $j=1,2,...,2^i,$
$$R\mid_{\Delta^j_{0,i}}=i.$$

For each $n$, the interval $J_n(\omega)$ depends on only the first $n$ coordinates in 
$\omega$ and moreover 
$$m \times m \{R=n\} = \sum_{j=1}^{2^n}\,P_{\omega_j}^n m(J_n(\omega_j)) = E_\omega(m(J_n(\omega))),$$
where $\omega_j$ ranges across the $2^n$ possible configurations $\omega$
with distinct values for the string $\alpha(\omega), \alpha(\varphi \omega),  \dots \alpha(\varphi^{n-1}\omega)$ and $E_\omega(\cdot)$ denotes expectation with respect to $\omega$.
Since $m(J_n(\omega)) = \frac{1}{2} m (I_{n-1}(\omega))$ we also obtain 
\begin{equation}\label{eqn:m(R)}
m\times m\{R > n\} = E_\omega (x_n^\prime(\omega) - 1/2) = \frac{1}{2} E_\omega(x_n(\omega)).
\end{equation}

Finally, we adopt the following (standard) notation throughout this paper. Given  sequences $a_n,~ (\textnormal{respectively }b_n)$ of nonnegative (respectively positive) real numbers, we write $a_n \asymp b_n$ if there is a constant $C \geq 1$ such that 
$C^{-1} b_n \leq  a_n \leq C b_n$,
and $a_n \sim b_n$ if $\lim \frac{a_n}{b_n} = 1$.

\subsection{Statement of the main result in this paper}

There is now a range of studies (including Young \cite{Y}, Zweim\"uller \cite{Z1, Z2}, Sarig \cite{S1},  Gou\"ezel \cite{G1} and Melbourne-Terhesiu \cite{MT}, for example)  that show how careful analysis of the asymptotics of $m\{R > n\}$ can reveal deep statistical properties of the underlaying map.  In our case, we are interested in the skew $S$ acting on the square.  The strength of our results, therefore, are likely to depend in a critical way, on the sharpness of estimates obtained on the measures of sets like $J_n(\omega)$ and $I_n(\omega)$.

For example, in \cite{Y} a key estimate for a single LSV-map  $T_\alpha$ reads as follows:  if $x_n$ is the sequence of points generated under the inverse of the leftmost branch of $T_\alpha$, such that $T_\alpha x_{n+1} = x_n$ and $x_1 = 1/2$, then there exists $c>0$ such that 
 $c^{-1} n^{-\pow} \leq x_n \leq c n^{-\pow}$. 
If we introduce the notation $x_n(\alpha):= x_n$ for this sequence of deterministic points, we can
record this observation as  
\begin{equation}\label{eqn:lsy}
x_n(\alpha) \asymp n^{-\pow}
\end{equation}
which upper bounds the size of return-time sets and is sufficient for establishing existence of the invariant density $h_\alpha$ and bounds on rate of correlation decay when 
$0< \alpha <1$ as detailed in  \cite{Y}.  In fact, the analysis in \cite{Y} actually proves more: define $c_n(\alpha): = n^\pow x_n(\alpha)$.  Then  $x_n(\alpha) = c_n(\alpha)n^{-\pow}$ with 
$ \lim_n c_n(\alpha) = \frac{1}{2} \alpha^{-\pow}:= c(\alpha) $. That is, in our notation
\begin{equation}\label{eqn:c(alpha)}
x_n(\alpha) \sim \frac{1}{2 \alpha^\pow}n^{-\pow}= c(\alpha)n^{-\pow}
\end{equation}
This sharper estimate is key for analysis of limit theorems for maps like $T_\alpha$. See, for example, Melbourne and Terhesiu \cite{MT} and  Gou\"ezel \cite{G3}. 

Moving to similar estimates on our skew product $S$,
the following rough estimate is obtained as Lemma 4.4 in Bahsoun, Bose and Duan \cite{BBD} as a first step in their analysis:  For all $\omega \in [0,1]$ and $n \geq 1$ 

\begin{equation}\label{eqn:crude_bounds}
x_n(\alpha) \leq x_n(\omega) \leq x_n(\beta), 
\end{equation}
where $x_n(\beta)$ denotes the sequence of deterministic points for $T_\beta$.
The main result in this paper is a much sharpened estimate on the location of $x_n(\omega)$ compared to Equation (\ref{eqn:crude_bounds}). 

Keeping the bounds (\ref{eqn:crude_bounds}) in mind, and following the setup for equation (\ref{eqn:c(alpha)}), for each 
$n, \omega$ define $c_n(\omega) := n^\pow x_n(\omega)$ (so that $x_n(\omega) = c_n(\omega)n^{-\pow}$).   We now state the main result in our paper: 

\begin{theorem}\label{thm:asymptotics} Let $0 < \alpha < \beta < \infty$.   For almost every $\omega$ in $[0,1]$ 
we have $ \lim_n  c_n(\omega)= \frac{1}{2} (\alpha p_1)^{-\frac{1}{\alpha}}= c(\alpha)p_1^{-\frac{1}{\alpha}}.$  That is
\begin{equation}\label{eqn:main}
x_n(\omega) \sim c(\alpha) p_1^{-\pow} n^{-\pow}.
\end{equation}
Moreover, $E_\omega(|c_n(\omega) - c(\alpha)p_1^{-\frac{1}{\alpha}}|) \rightarrow 0$, in other words, convergence of $n^\pow x_n$ to $c(\alpha)p_1^{-\pow}$ also holds in the $L^1-$norm. 
\end{theorem}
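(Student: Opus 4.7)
The plan is to reverse the preimage chain \eqref{def_backorbit} into a random forward orbit and then apply the change of variable $u:=z^{-\alpha}$ together with the strong law of large numbers applied to the $\alpha$-steps, treating the $\beta$-steps as an asymptotically negligible perturbation. Concretely, fix $n$ and $\omega$ and set $z_k:=x_{n-k}(\varphi^k\omega)$ for $0\le k\le n-1$, so $z_0=x_n(\omega)$, $z_{n-1}=1/2$, and $z_{k+1}=T_{\gamma_k}(z_k)$ with $\gamma_k:=\alpha(\varphi^k\omega)\in\{\alpha,\beta\}$. Under Lebesgue measure on $[0,1]$ the map $\varphi$ is Bernoulli with weights $(p_1,p_2)$, so $(\gamma_k)_{k\ge 0}$ is an i.i.d.\ $\{\alpha,\beta\}$-valued sequence.

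Let $u_k:=z_k^{-\alpha}$ and $\eta_k:=2^{\gamma_k}z_k^{\gamma_k}\le 1$ (since $z_k\le 1/2$). A second order Taylor expansion of $(1+\eta_k)^{-\alpha}$ at $0$ gives
$$u_{k+1}-u_k \;=\; z_k^{-\alpha}\bigl[(1+\eta_k)^{-\alpha}-1\bigr] \;=\; -\alpha\,2^{\gamma_k}z_k^{\gamma_k-\alpha} + \theta_k\, z_k^{2\gamma_k-\alpha},$$
with $|\theta_k|$ bounded by a constant depending only on $\alpha,\beta$. Telescoping $k=0,\dots,n-2$ and using $u_{n-1}=2^\alpha$ yields
$$u_0 \;=\; 2^\alpha + \alpha\, 2^\alpha\, N_\alpha(n,\omega) + \alpha\, 2^\beta \sum_{k:\gamma_k=\beta} z_k^{\beta-\alpha} \;-\; \sum_{k=0}^{n-2}\theta_k\, z_k^{2\gamma_k-\alpha},$$
where $N_\alpha(n,\omega):=\#\{0\le k\le n-2:\gamma_k=\alpha\}$. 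By the SLLN, $N_\alpha(n,\omega)/n\to p_1$ for a.e.\ $\omega$. For the remaining two sums I insert the crude upper bound $z_k=x_{n-k}(\varphi^k\omega)\le x_{n-k}(\beta)\le C(n-k)^{-1/\beta}$ from \eqref{eqn:crude_bounds} and \eqref{eqn:lsy}; this gives $\sum z_k^{\beta-\alpha}=O(n^{\alpha/\beta})$ and, splitting by the value of $\gamma_k$, $\sum z_k^{2\gamma_k-\alpha}=O(n^{1-\alpha/\beta})$. Both are $o(n)$ since $0<\alpha<\beta$, so $u_0/n\to \alpha\,2^\alpha\,p_1$ a.s., and consequently $n^{\pow}x_n(\omega)=(n/u_0)^{\pow}\to (\alpha\,2^\alpha\,p_1)^{-\pow}= c(\alpha)\,p_1^{-\pow}$.

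For the $L^1$ statement I would upgrade pointwise convergence via uniform integrability. The telescoping above in fact gives the deterministic one-sided bound $u_0\ge \alpha\, 2^\alpha\, N_\alpha(n,\omega)-C' n^{1-\alpha/\beta}$, so on the event $\{N_\alpha(n,\omega)\ge p_1 n/2\}$ the random variable $c_n(\omega)$ is bounded uniformly in $n$. On the complementary event Hoeffding's inequality gives exponentially small probability, whereas the crude bound yields only the polynomial estimate $c_n(\omega)\le C n^{\pow-1/\beta}$. Combining the two gives a uniform $L^p$-bound on $c_n(\omega)$ for every $p\ge 1$, hence uniform integrability and $L^1$ convergence. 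The main technical obstacle is precisely the one resolved by the crude bound \eqref{eqn:crude_bounds}: one needs to confirm that both the $\beta$-main-term sum and the second-order Taylor errors are $o(n)$, and this is exactly where the assumption $\alpha<\beta$ enters in an essential way.
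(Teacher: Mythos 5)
Your proposal is correct, and although it starts from the same analytic identity as the paper --- the telescoped recursion for $x_n(\omega)^{-\alpha}$ coming from the second--order Taylor bound of Lemma \ref{lem:taylor} --- the probabilistic part is organized genuinely differently. The paper keeps the random exponents inside weighted sums of the form $\frac{1}{n-1}\sum_k\bigl[2c_k(\cdot)k^{-1/\cdot}\bigr]^{\alpha(\varphi^{n-k}\omega)-\alpha}$; these are triangular arrays (both the terms and the noise positions move with $n$), so almost-sure control is obtained via Hoeffding's inequality plus Borel--Cantelli with a vanishing tolerance $t_n$, and the argument is run in two stages: Lemma \ref{lem:limsup} first, then Lemma \ref{lem:liminf}, which bootstraps off the limsup bound through a $\lfloor\sqrt n\rfloor$-splitting and a second large-deviation step. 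Your reindexing isolates the only genuinely random quantity as the digit count $N_\alpha(n,\omega)$, an honest partial sum of the fixed i.i.d.\ sequence $\mathbf 1_{\{\alpha(\varphi^k\omega)=\alpha\}}$, so the strong law applies directly, while every other term (the $\beta$-branch main terms and the quadratic remainders) is controlled deterministically and uniformly in $\omega$ by the crude bound \eqref{eqn:crude_bounds} together with $x_m(\beta)\asymp m^{-1/\beta}$, giving $O(n^{\alpha/\beta})$ and $O(n^{1-\alpha/\beta})$, both $o(n)$; this delivers the limsup and liminf simultaneously from a single telescoping. For the $L^1$ statement the paper imports the annealed estimate of Proposition 4.1 of \cite{BBD} and concludes via the Scheff\'e-type Lemma \ref{lem:L^1_conv}, whereas you prove uniform integrability directly: on $\{N_\alpha\ge p_1n/2\}$ your deterministic lower bound on $x_n(\omega)^{-\alpha}$ makes $c_n(\omega)$ bounded (for $n$ beyond a threshold depending only on the constants; small $n$ are trivial), and on the complement Hoeffding's exponential bound dominates the polynomial worst case $c_n\le Cn^{1/\alpha-1/\beta}$, so $\sup_n E_\omega(c_n^p)<\infty$ and Vitali's theorem applies. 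What each route buys: yours is more elementary and self-contained (one SLLN, one Hoeffding application, no bootstrap, no external annealed input) and makes transparent that only the asymptotic frequency of $\alpha$-steps matters; the paper's route produces, along the way, explicit exponential deviation bounds for the normalized sums, which are the kind of quantitative information one would want for rate or large-deviation refinements. The only points to state explicitly in a write-up are that the asymptotics $x_m(\gamma)\asymp m^{-1/\gamma}$ are being used for the full range $0<\gamma<\infty$ (as the paper itself does) and that the i.i.d.\ structure of the digits $\alpha(\varphi^k\omega)$ under Lebesgue measure is what licenses both the SLLN and the Hoeffding step.
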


In the terminology of random dynamical systems, this is a \emph{quenched limit theorem} (ie: almost everywhere) as opposed to \emph{annealed} (averaged over $\omega$).  In general, quenched results are harder to obtain than annealed ones. Examples of other quenched  
limit theorems can be found in 
Ayyer, Liverani and Stenlund \cite{ALS}.

The significance of Theorem \ref{thm:asymptotics} is what it implies for asymptotics of the random system. 
The main applications of this theorem will appear in Sections 3 and 4 where we derive limit theorems for the skew product $S$ and study asymptotics for infinite measure preserving systems, respectively. However, to illustrate the flavour of our results in a simple context, we close this section by revisiting (and sharpening\footnote{Statement (3) in the current Theorem \ref{thm:old} is essentially proved in \cite{BBD}. The exact asymptotics of $m \times m\{ R >n\}$ however, are new, and the precise decay of correlations in (4) for functions supported away from the line $x=0$ are also new.})  the main conclusion from 
\cite{BBD} that shows one way in which the fast system ($T_\alpha$) dominates the asymptotic behavior of the skew:  

\begin{theorem}\label{thm:old} Let $0<\alpha<\beta < 1$ and $S$ be as defined in \eqref{skew}. Then   $m \times m\{R>n\} \sim \frac{1}{2} c(\alpha)p_1^{-\pow} n^{-\pow} 
=\frac{1}{4} (\alpha p_1)^{-\pow} n^{-\pow}$.  Moreover, 
\begin{enumerate}
\item $S$ admits a unique absolutely continuous invariant probability measure $\nu$ with 
density $d\nu = h d(m \times m)$ where $h$ is Lipschitz on compact subsets of $(0,1] \times [0,1]$;
\item $(S, \nu)$ is mixing;
 \item for $\phi\in L^{\infty}(I\times I, m\times m)$ and $\psi$ a H\"older continuous function on $I\times I$ $$|Cor(\phi,\psi)|= O(n^{1-\frac{1}{\alpha}}),$$
where 
$$Cor(\phi,\psi)=\int\phi\circ S^n\cdot\psi d\nu-\int\phi d\nu\int\psi d\nu.$$\end{enumerate}
With more assumptions on the observables $\phi$ and $\psi$ we obtain the following stronger estimate:
\begin{enumerate}[resume]
\item  If $\phi\in L^{\infty}(I\times I, m\times m)$ and $\psi$ Lipschitz on $I \times I$, 
$\int \phi\, d\nu \neq 0$ and  $\int \psi \, d\nu \neq 0$ with both $\phi$ and $\psi$ identically $0$ in an open strip containing the line $x=0$, then 
  $$Cor(\phi,\psi) 
 \sim \frac{1}{4}E_\omega (h(\frac{1}{2}, \omega))(\alpha p_1)^{-\pow}\bigl( \pow -1\bigr)^{-1}n^{1-\pow}\int \phi \, d\nu \int \psi \, d\nu.$$
 \end{enumerate}

\end{theorem}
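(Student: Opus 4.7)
The plan is to first derive the tail asymptotic for $R$ and then bootstrap the four conclusions via the Young tower machinery built over the induced map $S^R$ on $\Delta_0$. For the tail, combine equation \eqref{eqn:m(R)}, which reads $m\times m\{R>n\}=\tfrac12 E_\omega(x_n(\omega))$, with the $L^1$-convergence statement in Theorem \ref{thm:asymptotics}: this immediately gives $m\times m\{R>n\}\sim \tfrac12 c(\alpha)p_1^{-1/\alpha}n^{-1/\alpha}=\tfrac14(\alpha p_1)^{-1/\alpha}n^{-1/\alpha}$, as claimed. Since $1/\alpha>1$, the expected return time $\int R\,d(m\times m)=\sum_n m\times m\{R>n\}$ is finite.

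For (1) and (2), I would observe that the partition $\{\Delta^j_{0,i}\}$ of $\Delta_0$ described in Section \ref{sec:inducing} makes $S^R$ a full-branch Markov map: each piece is sent bijectively onto $\Delta_0$, and in the $x$-coordinate each branch is a composition of one iterate of the smooth, uniformly expanding branch $2x-1$ with inverse branches of $T_{\alpha(\omega)}|_{[0,1/2]}$ applied away from $0$. Standard bounded-distortion arguments in $x$, together with the trivial product structure in $\omega$, place $S^R$ in the Gibbs--Markov class; the finiteness of $E(R)$ above, together with Young's tower construction \cite{Y}, yields a unique absolutely continuous $S$-invariant probability $\nu$ whose density is bounded and regular on the base and hence, after pulling forward through finitely many smooth iterates of $S$, Lipschitz on compact subsets of $(0,1]\times[0,1]$. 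Uniqueness and ergodicity can alternatively be read off from Theorem 5.2 of \cite{BBQ} via the correspondence with $P_T$-invariant densities. Mixing follows from aperiodicity of return times: the set $\{R=1\}=\Delta_{0,1}$ has positive measure, so the gcd of possible return times equals $1$ and the standard renewal argument for Young towers applies.

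For (3), the $O(n^{1-1/\alpha})$ bound is a direct application of Young's polynomial-tail decay-of-correlations theorem \cite{Y}: the tail estimate $m\times m\{R>n\}=O(n^{-1/\alpha})$, the Gibbs--Markov property of $S^R$, and the fact that H\"older observables on $I\times I$ lift to H\"older observables on the tower are the only inputs required.

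For (4), the target is a sharp first-order asymptotic, for which I would invoke the operator-renewal results of Gou\"ezel \cite{G1} (equivalently Sarig \cite{S1}, or the formulation of Melbourne--Terhesiu \cite{MT}): in the setup where the induced map is Gibbs--Markov, $m\times m\{R>n\}\sim Ln^{-1/\alpha}$ with $1/\alpha>1$, and the observables lift to functions supported on only finitely many tower levels, the general theorem produces an asymptotic of the form
\begin{equation*}
Cor(\phi,\psi)\sim \Bigl(\sum_{k>n}\nu\{R>k\}\Bigr)\cdot C(\nu)\cdot \int\phi\,d\nu\int\psi\,d\nu.
\end{equation*}
The hypothesis that $\phi,\psi$ vanish in a strip around $x=0$ is exactly what guarantees finite-level support of the lifts on the tower, and the non-degeneracy conditions $\int\phi\,d\nu,\int\psi\,d\nu\neq 0$ ensure the leading term survives. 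Summing the tail yields the factor $L(1/\alpha-1)^{-1}n^{1-1/\alpha}$ with $L=\tfrac14(\alpha p_1)^{-1/\alpha}$, matching the explicit constant in the statement. The main obstacle, and what determines the precise form of $C(\nu)$, is identification of the boundary constant $E_\omega(h(1/2,\omega))$: this requires tracing the invariant density along the left edge $x=1/2^+$ of the induced domain $\Delta_0$ and verifying that the normalization coming from the Young tower construction produces exactly this fiberwise expectation. Carrying through this computation in the skew product setting, and checking that the error terms in \cite{G1} are indeed $o(n^{1-1/\alpha})$ for the admissible class of observables, is where the bulk of the technical work lies.
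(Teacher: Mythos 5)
Your derivation of the tail $m\times m\{R>n\}\sim\frac12 c(\alpha)p_1^{-1/\alpha}n^{-1/\alpha}$ from Equation (\ref{eqn:m(R)}) and the $L^1$ part of Theorem \ref{thm:asymptotics} is exactly the paper's argument, and quoting the tower/Gibbs--Markov machinery (as in \cite{BBD}) for items (1)--(3) matches the paper as well. However, two steps are genuine gaps. First, your justification of the Lipschitz regularity of $h$ on compact subsets of $(0,1]\times[0,1]$ by ``pulling forward through finitely many smooth iterates'' does not work as stated: any region $\{x\geq\delta\}$ receives the images of infinitely many return branches (the top levels of every column of the tower project into it), so the density there is an infinite sum of branch contributions whose regularity must be controlled by other means. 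The paper proves this regularity by a cone argument (Lemma \ref{Lip}): using the correspondence $h=g\times{\bf 1}$ with $P_Tg=g$ from \cite{BBQ}, one exhibits a cone of decreasing $C^1$ functions invariant under both $P_{T_\alpha}$ and $P_{T_\beta}$, hence under $P_T$, and reads off local Lipschitz bounds from membership in the cone.

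Second, in (4) your claim that vanishing on a strip around $x=0$ forces the lifted observables to be supported on finitely many tower levels is false, for the same geometric reason: the lift of $\{x\geq\delta\}$ consists of the top boundedly many levels of \emph{every} column together with the base, and the columns are unboundedly tall, so the support meets arbitrarily high levels. The paper's remedy is the argument of Gou\"ezel \cite{G1}, Section 7: re-induce on the larger base $\Delta_N=\{(x,\omega):x_N(\omega)\leq x\leq 1\}$, whose first-return map is again Gibbs--Markov with the same return-tail asymptotics, so that $\phi,\psi$ are genuinely supported on the inducing set; then Theorem 6.3 of \cite{G1} applies directly and gives $Cor(\phi,\psi)\sim\sum_{k>n}\nu\{R>k\}\int\phi\,d\nu\int\psi\,d\nu$ with no undetermined factor $C(\nu)$. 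Relatedly, you deferred precisely the computation that produces the constant: it is the $\nu$-tail, not the Lebesgue tail, that must be summed. Since $\{R>k\}$ is the strip $\frac12<x\leq x_k'(\omega)$ of width $\frac12 x_k(\varphi\omega)$, the Lipschitz density gives $\nu\{R>k\}\sim\frac14 E_\omega(h(\frac12,\omega))(\alpha p_1)^{-1/\alpha}k^{-1/\alpha}$ (the paper's Lemma \ref{lem:ass_tail}), and summing over $k>n$ yields the stated constant $\frac14E_\omega(h(\frac12,\omega))(\alpha p_1)^{-1/\alpha}(\frac{1}{\alpha}-1)^{-1}$; in your write-up this constant is left inside an unidentified $C(\nu)$, which is the heart of the matter rather than a routine check.
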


\begin{proof}
The enumerated statements (1 -- 3) follow by identical arguments to those in \cite{BBD} once the claimed asymptotics on $m \times m\{R>n\}$ are derived. The latter are easily established since by Equation (\ref{eqn:m(R)}) we know
$m \times m \{R> n\} = \frac{1}{2}E_\omega(x_n(\omega)) 
= \frac{1}{2} n^{-\pow} E_\omega(c_n(\omega))$,
while  Theorem \ref{thm:asymptotics} implies
$E_\omega(c_n(\omega)) \rightarrow c(\alpha)p_1^{-\frac{1}{\alpha}} \in (0, \infty)$.
It follows that $m \times m \{ R > n \} \sim \frac{1}{2}c(\alpha)p_1^{-\pow}n^{-\pow}$.
The fact that the density $h$ is Lipschitz on compact subsets of $(0,1] \times I$ is proved in Lemma \ref{Lip} in Section 3 of this paper. 

To establish (4) we first assume that $\phi, \psi$ are supported on $\Delta_0$ with the 
stated regularity.  By Theorem 6.3 in Gou\"ezel \cite{G1} we have
$$Cor(\phi,\psi) \sim \sum_{k>n}  \nu\{ R >k\} \int \phi \, d\nu \int \psi \, d\nu.$$
The invariant measure $d\nu = h dm\times dm$, with $h$ Lipschitz on $\Delta_0$. 
This leads to the following estimate (see Lemma \ref{lem:ass_tail} in Section 3) on the measure of return time sets:
$$\nu\{ R>k\} \sim \frac{1}{4}E_\omega (h(\frac{1}{2}, \omega))(\alpha p_1)^{-\pow}k^{-\pow}.$$
Summing over $k > n$ gives the result.   

Now using the argument from Gou\"ezel  \cite{G1} Section 7 in our setting we can extend the support of $\phi, \psi$ to $\Delta_N := \{ (x,\omega) :  x_N(\omega) \leq x \leq 1\}$, with the same asymptotic return times. For sufficiently large $N$ this picks up the support of $\phi$ and $\psi$.
\end{proof}

\bigskip 
The rest of this paper is organized as follows. 
In Section 2 we prove Theorem \ref{thm:asymptotics}.  The computation depends on a classical  result of Hoeffding \cite{Ho} that gives exponental decay of large deviations for sums of bounded, independent random variables. 

In Section 3 we apply our estimates to derive central limit theorems and stable laws for the Birkhoff sums 
\begin{equation}\label{eqn:Bsum}
S_nf (x, \omega) := \sum_{k=0}^{n-1}f( S^k(x, \omega)),
\end{equation}
where $S$ is the skew product and parameters $0<\alpha < \beta <1$ pass through various ranges leading to quantitatively different asymptotics.   Precise results are itemized in Theorem \ref{thm:limits}. 
Finally, in Section 4 we discuss a piecewise affine version of the skew product system. When $1 \leq  \alpha < \beta$  a natural analogue of Theorem \ref{thm:asymptotics} applies, and the invariant measure is bound to be infinite ($\sigma-$finite).  We investigate correlation asymptotics for this case. To the best of our knowledge this is the first detailed analysis of asymptotics for a random system with an infinite invariant measure. 

\section{Proof of Theorem \ref{thm:asymptotics}} 

We begin with a basic calculus estimate. 

\begin{lemma}\label{lem:taylor}  Let $0< \alpha < \infty$ and $0 \leq x \leq 1$. Then 

$$1 - \alpha x \leq [1 + x]^{-\alpha} \leq  1- \alpha x + \frac{\alpha (1+ \alpha)}{2} x^2 $$
\end{lemma}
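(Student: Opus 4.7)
The plan is a direct application of Taylor's theorem with Lagrange remainder to the smooth function $f(x):=(1+x)^{-\alpha}$ on $[0,1]$. I would first record the relevant derivatives: $f(0)=1$, $f'(x) = -\alpha(1+x)^{-\alpha-1}$ with $f'(0) = -\alpha$, and $f''(x) = \alpha(\alpha+1)(1+x)^{-\alpha-2}$, which is strictly positive and (strictly) decreasing on $[0,\infty)$.

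For the lower bound $1 - \alpha x \leq (1+x)^{-\alpha}$, the cleanest route is to use convexity: since $f'' > 0$ on $[0,\infty)$, the graph of $f$ lies above its tangent line at the origin, which is precisely $y = 1 - \alpha x$. For the upper bound, Taylor's theorem with Lagrange remainder produces some $\xi \in (0,x)$ with
$$f(x) \;=\; 1 - \alpha x + \tfrac{1}{2}f''(\xi)x^2 \;=\; 1 - \alpha x + \tfrac{\alpha(\alpha+1)}{2}(1+\xi)^{-\alpha-2}x^2.$$
Since $\xi \geq 0$ we have $(1+\xi)^{-\alpha-2} \leq 1$, so the remainder is bounded by $\tfrac{\alpha(\alpha+1)}{2}x^2$, which is exactly the claim.

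There is no real obstacle; the only observation worth flagging is that the constant $\alpha(1+\alpha)/2$ appearing in the upper bound is simply $\tfrac{1}{2}f''(0)$, which dominates $\tfrac{1}{2}f''(\xi)$ on the entire half-line because $f''$ is decreasing. In particular the restriction $x \leq 1$ in the hypothesis plays no role in the argument; both inequalities remain valid for all $x \geq 0$.
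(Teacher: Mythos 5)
Your proof is correct and is precisely the standard elementary calculus argument (tangent-line/convexity for the lower bound, Lagrange-remainder Taylor expansion with $f''$ decreasing for the upper bound) that the paper leaves implicit by writing only ``Elementary''. Your added remark that the hypothesis $x\leq 1$ is not needed and both bounds hold for all $x\geq 0$ is also correct.
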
 

\begin{proof}
Elementary. 
\end{proof}

Recall our notation $x_n(\alpha)$ (resp.\ $x_n(\beta)$) for the sequence of points generated by first branch inverse of the deterministic map $T_\alpha$ (resp.\ 
 $T_\beta$). 
Recall also the basic estimates in Equations (\ref{eqn:lsy}), (\ref{eqn:c(alpha)}) and 
(\ref{eqn:crude_bounds}).
Our goal is to obtain sharp decay estimates on 
$n [x_n(\omega)]^\alpha$ and for that we will need the following classical large deviations estimate.

\begin{proposition}\label{prop:large_deviations} (Hoeffding \cite{Ho}, Theorem 1)
Suppose that $X_k = X_k(\omega), \, k=1, 2, \dots n$ are independent random variables, uniformly bounded such that $0\leq X_k \leq 1$.  Let 
$\bar X_n = n^{-1} \sum_{k=1}^n X_k$ and $E(\bar X_n)= n^{-1} \sum_{k=1}^n E(X_k)$.  Then for every $t >0$ we have
$$\mathbb{P}\{ |\bar X_n - E(\bar X_n)| \geq t \} \leq  \exp(-2nt^2)$$
\end{proposition}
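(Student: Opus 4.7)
The plan is to follow the standard Chernoff-bound strategy that constitutes the textbook proof of Hoeffding's inequality; since the paper treats this as a cited result (referencing \cite{Ho} directly), the task reduces to sketching Hoeffding's original argument. First I would reduce to the one-sided deviation $\mathbb{P}\{\bar X_n - E(\bar X_n) \geq t\}$, recovering the two-sided bound by applying the same estimate to $1-X_k$. Setting $Y_k = X_k - E(X_k)$, the $Y_k$ are independent, centered, and take values in an interval of length $1$. Markov's inequality applied to $e^{s\sum_k Y_k}$, combined with independence across $k$, yields for every $s>0$
$$\mathbb{P}\Bigl\{\sum_{k=1}^n Y_k \geq nt\Bigr\} \leq e^{-snt}\prod_{k=1}^n E[e^{sY_k}].$$

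The heart of the argument is Hoeffding's lemma: if $Y$ is centered with $a \leq Y \leq b$, then $E[e^{sY}] \leq \exp\bigl(s^2(b-a)^2/8\bigr)$. I would prove this by writing $y \in [a,b]$ as the convex combination $\lambda a + (1-\lambda)b$ with $\lambda = (b-y)/(b-a)$ and applying convexity of $u \mapsto e^{su}$ pointwise inside the expectation; the centering $E(Y)=0$ then reduces the bound to an inequality for a single-variable cumulant-generating function $\psi(s)$, which is controlled by its second-order Taylor expansion. The key analytic input is that $\psi''(s) \leq (b-a)^2/4$ uniformly in $s$ — a variance-type estimate for a random variable supported in $[a,b]$ — so that integrating twice from $0$ produces the quadratic bound with constant $1/8$. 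In our setting $b-a = 1$, so $E[e^{sY_k}] \leq e^{s^2/8}$, and substituting back gives $\mathbb{P}\{\sum Y_k \geq nt\} \leq \exp(-snt + ns^2/8)$.

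Optimizing the free parameter $s > 0$ — the minimum of $-snt + ns^2/8$ occurs at $s = 4t$ — produces the one-sided bound $e^{-2nt^2}$, and the symmetric argument on the lower tail completes the two-sided inequality as stated. The main (indeed only) obstacle is Hoeffding's lemma itself: the Markov/independence/optimization chain is mechanical once that lemma is in hand, but the careful second-derivative estimate on the log of the moment generating function of a centered bounded variable is the nontrivial analytical piece, and it is precisely what forces the universal constant $1/8$ — and hence the exponent $2nt^2$ — to appear.
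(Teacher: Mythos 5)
The paper offers no proof of this proposition --- it is quoted directly from Hoeffding's 1963 paper --- so there is nothing internal to compare against; your reconstruction is the standard Chernoff-bound argument (exponential Markov inequality, independence to factor the moment generating function, Hoeffding's lemma via convexity of $u\mapsto e^{su}$ and the second-derivative bound $\psi''\leq (b-a)^2/4$ on the log-MGF, then optimization at $s=4t$), and every step you describe is correct and yields the one-sided bound $\mathbb{P}\{\bar X_n - E(\bar X_n)\geq t\}\leq e^{-2nt^2}$, which is what Hoeffding's Theorem 1 actually asserts.

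The one point you gloss over is the passage to the two-sided bound. Applying the same estimate to $1-X_k$ (equivalently, to $-Y_k$) and taking a union bound over the two tails gives $\mathbb{P}\{|\bar X_n - E(\bar X_n)|\geq t\}\leq 2e^{-2nt^2}$, not $e^{-2nt^2}$ ``as stated.'' So your argument does not literally reproduce the displayed inequality; the discrepancy originates in the paper's own statement, which silently drops the factor $2$ when upgrading Hoeffding's one-sided Theorem 1 to a two-sided bound. This is harmless for the only use made of the proposition in the paper --- a Borel--Cantelli argument requiring $\sum_n \exp(-2(n-1)t_n^2)<\infty$, which is unaffected by a constant factor --- but if you intend your proof to establish the two-sided form you should either carry the factor $2$ explicitly or note why it is immaterial.
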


We proceed by a sequence of lemmas.   
  
\begin{lemma}\label{lem:limsup}
There is a set $G_1 \subseteq [0,1]$ of full measure such that for every $\omega \in G_1$ we have 
$$\limsup n^{\frac{1}{\alpha}} x_n(\omega) \leq 
 [\alpha 2^\alpha p_1]^{-\frac{1}{\alpha}} = c(\alpha)p_1^{-\frac{1}{\alpha}}.$$
\end{lemma}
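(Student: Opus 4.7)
The plan is to reindex the inverse iteration as a forward orbit and then track the quantity $z_k := y_k^{-\alpha}$, which should grow essentially linearly in $k$ at the rate $\alpha 2^\alpha p_1$ almost surely.

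First, for fixed $n$ I would set $y_k = y_k(\omega) := x_{n-k}(\varphi^k\omega)$ for $k=0,1,\dots,n-1$. Unwinding \eqref{def_backorbit} shows that this is exactly the forward orbit
\[
y_{k+1} \;=\; T_{\alpha(\varphi^k\omega)}(y_k) \;=\; y_k\bigl(1 + 2^{\alpha(\varphi^k\omega)} y_k^{\alpha(\varphi^k\omega)}\bigr),
\]
starting at $y_0 = x_n(\omega)$ and ending at $y_{n-1} = x_1(\varphi^{n-1}\omega) = 1/2$. Raising the recursion to the $-\alpha$ power and applying the upper half of Lemma \ref{lem:taylor} with $u_k := 2^{\alpha(\varphi^k\omega)} y_k^{\alpha(\varphi^k\omega)}$ produces a per-step lower bound on the decrement
\[
z_k - z_{k+1} \;\geq\; \alpha z_k u_k - \tfrac{\alpha(1+\alpha)}{2} z_k u_k^2.
\]

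Next I would split into the two cases dictated by $\alpha(\varphi^k\omega)$. If $\alpha(\varphi^k\omega) = \alpha$, then $z_k u_k = 2^\alpha$ and $z_k u_k^2 = 2^{2\alpha} z_k^{-1}$, yielding a decrement of $\alpha 2^\alpha - O(z_k^{-1})$. If $\alpha(\varphi^k\omega) = \beta$, the decrement is nonnegative but of much smaller order (because $\beta > \alpha$ and $y_k$ is small), so I would simply discard it for a lower bound. Writing $N_n(\omega) := \#\{0 \leq k \leq n-2 : \alpha(\varphi^k\omega) = \alpha\}$ and summing,
\[
x_n(\omega)^{-\alpha} - 2^\alpha \;=\; z_0 - z_{n-1} \;\geq\; \alpha 2^\alpha N_n(\omega) - C \sum_{k=0}^{n-2} z_k^{-1}.
\]

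To show the error sum is $o(n)$ I would invoke the crude upper bound \eqref{eqn:crude_bounds}, which gives $y_k \leq x_{n-k}(\beta) \asymp (n-k)^{-1/\beta}$ and hence $z_k^{-1} \leq C(n-k)^{-\alpha/\beta}$. Since $\alpha/\beta < 1$, the tail sum is $O(n^{1-\alpha/\beta}) = o(n)$. For the main term, the indicators $\mathbf{1}\{\alpha(\varphi^k\omega) = \alpha\}$ are i.i.d.\ Bernoulli$(p_1)$ under Lebesgue, so Proposition \ref{prop:large_deviations} together with a Borel--Cantelli argument produces a full-measure set $G_1$ on which $N_n(\omega)/n \to p_1$. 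Combining yields $z_0 \geq \alpha 2^\alpha p_1 n (1+o(1))$, which rearranges to the desired $\limsup n^{\pow} x_n(\omega) \leq (\alpha 2^\alpha p_1)^{-\pow} = c(\alpha)p_1^{-\pow}$.

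The main obstacle I anticipate is the bookkeeping in the decrement sum: one must keep enough of Lemma \ref{lem:taylor} to see that the quadratic remainder and the $\beta$-branch contributions together are $o(n)$, while only the $\alpha$-branch decrements contribute at leading order. The crude a priori bound \eqref{eqn:crude_bounds} is precisely what makes this possible; without it the error terms in Step 3 would swamp the main signal. I expect the matching liminf estimate (needed to strengthen this one-sided bound to a genuine limit) to require more care, exploiting both halves of Lemma \ref{lem:taylor} together with Hoeffding applied in the opposite direction.
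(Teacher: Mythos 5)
Your proposal is correct and follows essentially the same route as the paper: the same telescoping of $x^{-\alpha}$ along the backward orbit, the same Taylor bound (Lemma \ref{lem:taylor}), the a priori bounds \eqref{eqn:crude_bounds} to control the quadratic remainder, and Hoeffding/Borel--Cantelli (or just the strong law for the i.i.d.\ digits) to get the almost-sure frequency $N_n(\omega)/n \to p_1$. The only difference is bookkeeping: you discard the $\beta$-branch decrements by monotonicity and reduce the probabilistic input to the digit frequency, whereas the paper keeps the random exponents, normalizes by $n$, and shows via Hoeffding that the two averaged sums converge to $p_1$ and $0$ respectively -- the same estimate organized differently.
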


\begin{proof}
We begin with the standard expression derived directly from the definition of 
$T_{\alpha(\omega)}$:
\begin{equation}\label{eqn:basic}
\frac{1}{[x_{n-1}(\varphi \omega)]^\alpha} = \frac{1}{[x_{n}(\omega)]^\alpha}
[1 +  [2x_n(\omega)]^{\alpha(\omega)}]^{-\alpha}.
\end{equation}
Using the upper bound contained in the right hand side of Lemma \ref{lem:taylor}, with $x=[2x_n(\omega)]^{\alpha(\omega)}$ and 
reordering terms we obtain
\begin{equation*}
\frac{1}{[x_{n}(\omega)]^\alpha} - \frac{1}{[x_{n-1}(\varphi \omega)]^\alpha} \geq 
\alpha 2^\alpha[2x_n(\omega)]^{\alpha(\omega)-\alpha} - 
\frac{\alpha (1+ \alpha )}{2} 2^\alpha[2x_n(\omega)]^{2\alpha(\omega) - \alpha}.
\end{equation*}
Applying this inequality along the sequence $x_k(\varphi^{n-k}\omega)$ for $k=2$ through $n$, and keeping in mind that $x_1(\omega) = \frac{1}{2}$ for every 
$\omega$ gives the basic inequality 
\begin{equation*}\begin{split}
\frac{1}{[x_{n}(\omega)]^\alpha} \geq 2^\alpha &+ \alpha 2^\alpha \{
\sum_{k=2}^n[2x_k(\varphi^{n-k}\omega)]^{\alpha(\varphi^{n-k}\omega) - \alpha} \\
&- \frac{1+ \alpha }{2} \sum_{k=2}^n [2x_k(\varphi^{n-k}\omega)]^{2\alpha(\varphi^{n-k}\omega) - \alpha}\}.
\end{split}
\end{equation*}

Next, we use the estimate contained in Equation (\ref{eqn:crude_bounds}),  the notation from Equation (\ref{eqn:c(alpha)}) and division by $n$ to obtain 
\begin{equation*}\begin{split}
\frac{1}{n[x_{n}(\omega)]^\alpha} \geq \frac{2^\alpha}{n} &+ 
\frac{n-1}{n}\alpha 2^\alpha \biggl\{
\frac{1}{n-1}\sum_{k=2}^n\left[\frac{2c_k(\alpha)}{k^{\frac{1}{\alpha}}}\right]^{\alpha(\varphi^{n-k}\omega) - \alpha}\\ 
&- \frac{1+ \alpha }{2} \frac{1}{n-1} \sum_{k=2}^n \left[\frac{2c_k(\beta)}{k^{\frac{1}{\beta}}}\right]^{2\alpha(\varphi^{n-k}\omega) - \alpha}\biggr\}.
\end{split}
\end{equation*}
Now consider the quantity
\begin{equation} \label{eqn:A_n}
\begin{split}
A_n(\omega) &:= 
\frac{1}{n-1}\sum_{k=2}^n\left[\frac{2c_k(\alpha)}{k^{\frac{1}{\alpha}}}\right]^{\alpha(\varphi^{n-k}\omega) - \alpha} \\
&- \frac{1+ \alpha }{2} \frac{1}{n-1} \sum_{k=2}^n \left[\frac{2c_k(\beta)}{k^{\frac{1}{\beta}}}\right]^{2\alpha(\varphi^{n-k}\omega) - \alpha}
\end{split}
\end{equation}
We estimate each sum in $A_n$ independently using the large deviations estimate detailed in 
Proposition \ref{prop:large_deviations}. 

For the first sum in Equation (\ref{eqn:A_n}), using the substitution 
$$X_k(\omega) = \left[\frac{2c_k(\alpha)}{k^{\frac{1}{\alpha}}}\right]^{\alpha(\varphi^{n-k}\omega) 
- \alpha}$$
from which we compute
$$E_\omega(X_k) = p_1 + p_2\left[\frac{2c_k(\alpha)}{k^{\frac{1}{\alpha}}}\right]^{\beta-\alpha},$$
and using Proposition \ref{prop:large_deviations} and a positive value $t=t_n>0$ we obtain 
\begin{equation}\begin{split}
\mathbb P\{ \biggl|\frac{1}{n-1}\sum_{k=2}^n\left[\frac{2c_k(\alpha)}{k^{\frac{1}{\alpha}}}\right]^{\alpha(\varphi^{n-k}\omega) - \alpha} &- \frac{1}{n-1}\sum_{k=2}^n ( p_1 + p_2\left[\frac{2c_k(\alpha)}{k^{\frac{1}{\alpha}}}\right]^{\beta-\alpha})\biggr|
\geq t_n\} \\ &\leq \exp(-2(n-1)t_n^2).
\end{split}
\end{equation}
If we choose $t_n \downarrow 0$ such that\footnote{$t_n = n^{-1/3}$ does the job, for example.} $\sum_n\exp(-2(n-1)t_n^2 ) <\infty$ then by 
Borel-Cantelli, keeping in mind that $c_k(\alpha)$ is bounded, and 
$$\frac{1}{n-1}\sum_{k=2}^n (p_1 + p_2\left[\frac{2c_k(\alpha)}{k^{\frac{1}{\alpha}}}\right]^{\beta-\alpha}) = p_1 + O(n^{1 -\beta/\alpha}),
$$
we conclude that 
\begin{equation}\label{eqn:first_term}
\frac{1}{n-1}\sum_{k=2}^n\left[\frac{2c_k(\alpha)}{k^{\frac{1}{\alpha}}}\right]^{\alpha(\varphi^{n-k}\omega) - \alpha} \rightarrow p_1
\end{equation}
for almost every $\omega \in [0,1]$.

We can follow a similar argument for the second term in Equation (\ref{eqn:A_n}). 
This time however, 
$$ E_\omega(\frac{1+ \alpha }{2} \frac{1}{n-1} \sum_{k=2}^n \left[\frac{2c_k(\beta)}{k^{\frac{1}{\beta}}}\right]^{2\alpha(\varphi^{n-k}\omega) - \alpha})= O(n^{-\gamma}),
$$
where $\gamma = \min\{ \alpha/\beta, 2-\alpha/\beta\} = \alpha/\beta >0$ since 
$\alpha < \beta$. Therefore we conclude that for almost every $\omega$
\begin{equation}\label{eqn:second_term}
\frac{1+ \alpha }{2} \frac{1}{n-1} \sum_{k=2}^n [\frac{2c_k(\beta)}{k^{\frac{1}{\beta}}}]^{2\alpha(\varphi^{n-k}\omega) - \alpha} \rightarrow 0.
\end{equation}
Combining Equations (\ref{eqn:first_term}), and (\ref{eqn:second_term}) 
 shows that $A_n \rightarrow p_1$ almost everywhere. 

It follows that almost surely (w.r.t.\ $\omega$) $\liminf_n \frac{1}{n [x_n(\omega)]^\alpha} 
\geq \alpha 2^\alpha p_1$. The statement of the lemma follows. 
\end{proof}

\begin{lemma}\label{lem:liminf}
There is a set $G_2 \subseteq [0,1]$ of full measure such that 
for every $\omega \in G_2$ we have
$$\liminf n^{\frac{1}{\alpha}} x_n(\omega) \geq
 [\alpha 2^\alpha p_1]^{-\frac{1}{\alpha}} = c(\alpha)p_1^{-\frac{1}{\alpha}}.$$
\end{lemma}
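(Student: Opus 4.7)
The plan is to follow the same template as the proof of Lemma \ref{lem:limsup}, but with the two inequalities reversed at key points, so that we extract an upper bound on $1/[x_n(\omega)]^\alpha$ instead of a lower bound. The argument is actually \emph{simpler} than Lemma \ref{lem:limsup} because the lower Taylor inequality has no quadratic correction, so only one sum needs to be controlled.

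First, I would return to the identity (\ref{eqn:basic}) and apply the lower bound $[1+y]^{-\alpha}\ge 1-\alpha y$ from Lemma \ref{lem:taylor} with $y=[2x_n(\omega)]^{\alpha(\omega)}$. Rearranging exactly as in the proof of Lemma \ref{lem:limsup}, but with the opposite inequality, gives
$$\frac{1}{[x_n(\omega)]^\alpha}-\frac{1}{[x_{n-1}(\varphi\omega)]^\alpha}\le \alpha\, 2^\alpha [2x_n(\omega)]^{\alpha(\omega)-\alpha}.$$
Telescoping along $x_k(\varphi^{n-k}\omega)$ from $k=2$ to $k=n$, using $x_1(\cdot)\equiv\tfrac12$ and dividing by $n$, this yields
$$\frac{1}{n[x_n(\omega)]^\alpha}\le \frac{2^\alpha}{n}+\alpha\, 2^\alpha\,\frac{n-1}{n}\,B_n(\omega),\qquad B_n(\omega):=\frac{1}{n-1}\sum_{k=2}^n [2x_k(\varphi^{n-k}\omega)]^{\alpha(\varphi^{n-k}\omega)-\alpha}.$$
The target reduces to showing $\limsup_n B_n(\omega)\le p_1$ for a.e.\ $\omega$.

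Second, to apply Hoeffding I want to replace the random inner quantity $x_k(\varphi^{n-k}\omega)$ with a deterministic surrogate that preserves the upper bound. Because $\alpha(\varphi^{n-k}\omega)-\alpha\in\{0,\beta-\alpha\}$ is nonnegative and $x_k(\varphi^{n-k}\omega)\le x_k(\beta)$ by (\ref{eqn:crude_bounds}), the pointwise domination
$$[2x_k(\varphi^{n-k}\omega)]^{\alpha(\varphi^{n-k}\omega)-\alpha}\le [2x_k(\beta)]^{\alpha(\varphi^{n-k}\omega)-\alpha}=:X_k(\omega)$$
holds, and the $X_k$ are independent random variables taking values in $[0,1]$, each depending only on the single coordinate $\varphi^{n-k}\omega$. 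Their mean is
$$E(X_k)=p_1+p_2\,[2x_k(\beta)]^{\beta-\alpha}=p_1+O(k^{-(\beta-\alpha)/\beta}),$$
so $\tfrac{1}{n-1}\sum_{k=2}^n E(X_k)\to p_1$ since the decaying exponent is strictly positive.

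Third, Proposition \ref{prop:large_deviations} gives $\mathbb P\{|\tfrac{1}{n-1}\sum_{k=2}^n X_k-\tfrac{1}{n-1}\sum_{k=2}^n E(X_k)|\ge t_n\}\le \exp(-2(n-1)t_n^2)$, and picking $t_n=n^{-1/3}$ (as in Lemma \ref{lem:limsup}) makes the right-hand side summable. A Borel-Cantelli argument then gives $\tfrac{1}{n-1}\sum_{k=2}^n X_k(\omega)\to p_1$ on a full-measure set $G_2\subseteq[0,1]$, and combining with the pointwise domination above yields $\limsup_n B_n(\omega)\le p_1$ on $G_2$. Plugging back into the displayed inequality produces $\limsup_n \tfrac{1}{n[x_n(\omega)]^\alpha}\le \alpha\,2^\alpha p_1$, equivalently $\liminf_n n^{1/\alpha}x_n(\omega)\ge (\alpha 2^\alpha p_1)^{-1/\alpha}=c(\alpha)p_1^{-1/\alpha}$, as claimed. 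No step looks delicate; the main thing to verify carefully is that the surrogate variable $X_k$ retains the correct value $1$ on $\{\alpha(\varphi^{n-k}\omega)=\alpha\}$ so that the expected Cesàro mean genuinely tends to $p_1$ rather than to some smaller constant.
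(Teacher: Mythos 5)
Your proof is correct. The skeleton is the same one the paper uses for both Lemma \ref{lem:limsup} and Lemma \ref{lem:liminf} — the identity \eqref{eqn:basic}, the appropriate side of Lemma \ref{lem:taylor}, telescoping, then Hoeffding (Proposition \ref{prop:large_deviations}) with $t_n=n^{-1/3}$ and Borel--Cantelli — but your treatment of the key domination step genuinely differs from the paper's proof of Lemma \ref{lem:liminf}. The paper feeds the conclusion of Lemma \ref{lem:limsup} back into the argument: for $\omega\in G_1$ the bound $x_k\le (c(\alpha)p_1^{-1/\alpha}+1)k^{-1/\alpha}$ is only available beyond a threshold $N(\omega)$, so the Ces\`aro sum is split at $\lfloor\sqrt n\rfloor$, the initial block (of size $o(n)$) is discarded, Hoeffding is applied to the tail average $A_n^\prime$, and the good set is $G_1\cap G_2$. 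You instead dominate $x_k(\varphi^{n-k}\omega)$ by the deterministic sequence $x_k(\beta)$ via \eqref{eqn:crude_bounds}, which holds for every $\omega$ and every $k$; consequently there is no threshold, no $\sqrt n$-split, and no logical dependence on Lemma \ref{lem:limsup} at all. The only cost is a slower (but still vanishing) error in the mean, $E(X_k)-p_1=O(k^{-(\beta-\alpha)/\beta})$ rather than the $O(n^{1-\beta/\alpha})$ rate the paper gets from the sharper $k^{-1/\alpha}$ decay, and this rate is irrelevant for the almost-sure statement since only $\beta>\alpha$ is needed to make the exponent positive. Your route also sidesteps a delicate point in the paper's version, namely that the Lemma \ref{lem:limsup} bound is invoked along the shifted fibres $x_k(\varphi^{n-k}\omega)$, where the natural threshold is $N(\varphi^{n-k}\omega)$ rather than $N(\omega)$; with the uniform bound \eqref{eqn:crude_bounds} no such issue arises. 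The verification you flag at the end — that the exponent $\alpha(\varphi^{n-k}\omega)-\alpha\ge 0$ and $2x_k(\beta)\le 1$ make the substitution monotone, and that the surrogate equals $1$ on $\{\alpha(\varphi^{n-k}\omega)=\alpha\}$ so the Ces\`aro mean of expectations tends to $p_1$ and not less — is exactly the right check, and with it the argument is complete.
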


\begin{proof}
Let $G_1$ be a set of full measure in $\omega$ for which convergence is obtained in Lemma \ref{lem:limsup}. In particular, for every $\omega \in G_1$ there exists an $N=N(\omega)$ 
such that for all $n> N(\omega)$ we have 
$$x_n(\omega) \leq \frac{c(\alpha) p_1^{-\frac{1}{\alpha}}+1}{n^{\frac{1}{\alpha}}}.$$

Now, starting with Equation (\ref{eqn:basic}), using the lower bound in 
Lemma \ref{lem:taylor}, dividing by $n$, and assuming $\lfloor \sqrt{n}\rfloor \geq N(\omega)$ we get the following expression:
\begin{equation*}\begin{split}
\frac{1}{n [x_n(\omega)]^\alpha}  &\leq \frac{1}{n} 2^\alpha
+ \alpha 2^\alpha \frac{1 }{n} \sum_{k=2}^n
 [2 x_k(\varphi^{n-k} \omega)]^{\alpha(\varphi^{n-k} \omega) - \alpha}\\
& \leq \frac{1}{n} 2^\alpha
+ \alpha 2^\alpha \frac{ \lfloor \sqrt{n}\rfloor -1 }{n} \frac{1}{\lfloor \sqrt{n}\rfloor -1 } \sum_{k=2}^{\lfloor \sqrt{n}\rfloor}
 [2 x_k(\varphi^{n-k} \omega)]^{\alpha(\varphi^{n-k} \omega) - \alpha} \\
&+\frac{n- \lfloor \sqrt{n}\rfloor}{n} \frac{1}{n- \lfloor \sqrt{n}\rfloor}\sum_{\lfloor \sqrt{n}\rfloor +1}^n 
 \left[2 \frac{c(\alpha) p_1^{-\frac{1}{\alpha}}+1}{n^{\frac{1}{\alpha}}}\right]^{\alpha(\varphi^{n-k} \omega) - \alpha}
 \end{split}
 \end{equation*}
 Now define, for any $\omega \in [0,1]$
 \begin{equation}\label{eqn:A_n-prime}
 A_n^\prime(\omega) := \frac{1}{n-\lfloor \sqrt{n}\rfloor} \sum_{k=\lfloor \sqrt{n}\rfloor+1}^n
 \left[2 \frac{c(\alpha) p_1^{-\frac{1}{\alpha}}+1}{n^{\frac{1}{\alpha}}}\right]^{\alpha(\varphi^{n-k} \omega) - \alpha}
 \end{equation}
 Once again, application of the large deviation estimates in Proposition \ref{prop:large_deviations} combined with the direct calculation 
$E_{\omega}(A_n^\prime) = p_1  + O(n^{1 - \beta/\alpha})$ 
shows that $A_n^\prime \rightarrow p_1$ for almost every $\omega$ in a set $G_2$ of full measure. 

Finally, fix an arbitrary $\omega \in G_1 \cap G_2$. Provided $n$ is large enough such 
that $\lfloor \sqrt{n} \rfloor \geq N(\omega)$ we estimate
\begin{equation*}
\frac{1}{n [x_n(\omega)]^\alpha}   \leq \frac{1}{n}2^\alpha 
+ \alpha 2^\alpha \frac{\lfloor \sqrt{n}\rfloor}{n}  + \alpha 2^\alpha \frac{n-\lfloor \sqrt{n}\rfloor}{n} A_n^\prime(\omega).
\end{equation*}

The right hand side of this expression converges to $\alpha 2^\alpha p_1$.
It follows that for all  $\omega \in G_1 \cap G_2$,
$\liminf n [x_n(\omega)]^\alpha \geq  \alpha 2^\alpha p_1$.  The lemma now follows by taking roots.

\end{proof}

Lemmas \ref{lem:limsup} and \ref{lem:liminf} together give the almost sure convergence claimed in Theorem \ref{thm:asymptotics}.  To obtain the $L^1$ convergence we first observe

\begin{lemma}\label{lem:limsup_exp}
$$ \limsup_{n\to\infty} E_\omega(n^{\frac{1}{\alpha}} x_n(\omega)) \leq c(\alpha)p_1^{-\frac{1}{\alpha}}.$$
\end{lemma}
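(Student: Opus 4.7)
\textbf{Proof plan for Lemma \ref{lem:limsup_exp}.}
The plan is to upgrade the almost-sure upper bound from Lemma \ref{lem:limsup} into an upper bound that holds outside an event of exponentially small probability, and then to control the contribution of that bad event to the expectation by means of the crude polynomial bound
\begin{equation*}
c_n(\omega) = n^{\frac{1}{\alpha}} x_n(\omega) \leq n^{\frac{1}{\alpha}} x_n(\beta) \leq C\, n^{\frac{1}{\alpha}-\frac{1}{\beta}},
\end{equation*}
which follows from \eqref{eqn:crude_bounds} and \eqref{eqn:lsy}. The exponent $\tfrac{1}{\alpha}-\tfrac{1}{\beta}>0$, so this is a genuine polynomial blowup in $n$, and the central point is that the probabilistic tail will be made to beat it.

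\textbf{Step 1: deterministic conversion.} Fix $\eta>0$ and choose $\delta=\delta(\eta)>0$ small enough that $[\alpha 2^\alpha p_1(1-\delta)]^{-1/\alpha}\leq c(\alpha)p_1^{-1/\alpha}(1+\eta)=:K_\eta$. Inspecting the inequality
$\tfrac{1}{n[x_n(\omega)]^\alpha}\geq \tfrac{2^\alpha}{n}+\tfrac{n-1}{n}\alpha 2^\alpha A_n(\omega)$
derived inside the proof of Lemma \ref{lem:limsup}, there exists $N_0=N_0(\delta)$ such that for all $n\geq N_0$, the event $\{A_n(\omega)\geq p_1(1-\delta/2)\}$ already forces $c_n(\omega)\leq K_\eta$.

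\textbf{Step 2: exponential tail via Hoeffding.} The key observation is that each summand in the two sums of \eqref{eqn:A_n} lies in $[0,1]$: indeed $2x_k(\alpha)\leq 1$ and $2x_k(\beta)\leq 1$ for $k\geq 2$, while the exponents $\alpha(\varphi^{n-k}\omega)-\alpha\in\{0,\beta-\alpha\}$ and $2\alpha(\varphi^{n-k}\omega)-\alpha\in\{\alpha,2\beta-\alpha\}$ are nonnegative. Moreover the means of the first and second sums differ from $p_1$ and $0$, respectively, by $O(n^{-\gamma})$ for some $\gamma>0$ (exactly as computed in the proof of Lemma \ref{lem:limsup}). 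Applying Proposition \ref{prop:large_deviations} with the \emph{fixed} threshold $t=\delta/8$ (rather than $t_n\to 0$) therefore gives, for $n$ large enough that the $O(n^{-\gamma})$ bias is absorbed into $\delta/8$,
\begin{equation*}
\mathbb{P}\{A_n(\omega) < p_1(1-\delta/2)\} \leq 2\exp(-c(\delta)\, n),
\end{equation*}
so in combination with Step 1, $\mathbb{P}\{c_n(\omega)>K_\eta\}\leq 2\exp(-c(\delta)\, n)$.

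\textbf{Step 3: truncation.} Split the expectation at the threshold $K_\eta$:
\begin{equation*}
E_\omega(c_n(\omega))\leq K_\eta + \|c_n\|_\infty\cdot \mathbb{P}\{c_n>K_\eta\}\leq K_\eta + C\, n^{\frac{1}{\alpha}-\frac{1}{\beta}}\exp(-c(\delta)\, n).
\end{equation*}
The second term vanishes as $n\to\infty$, whence $\limsup_n E_\omega(c_n)\leq K_\eta$. Letting $\eta\to 0^+$ gives the conclusion. The main obstacle is really confined to Step 2: one must replace the vanishing-threshold Borel--Cantelli mode of Hoeffding used to produce the almost-sure limit in Lemma \ref{lem:limsup} by a fixed-threshold mode producing genuine exponential decay in $n$, and verify that this exponential decay dominates the polynomial crude bound from \eqref{eqn:crude_bounds}. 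Everything else is bookkeeping.
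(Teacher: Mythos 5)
Your argument is correct, but it takes a genuinely different route from the paper. The paper's proof is a short deduction from Proposition 4.1 of \cite{BBD}, which supplies the quenched comparison $E_\omega(n^{\frac{1}{\alpha}}x_n(\omega)) \leq n^{\frac{1}{\alpha}}x_{\lfloor p_0 n\rfloor}(\alpha)+n^{\frac{1}{\alpha}}\exp(-2n(p_1-p_0)^2)$ for any $p_0\in(0,p_1)$; combining this with the deterministic asymptotic \eqref{eqn:c(alpha)} for $x_m(\alpha)$ and letting $p_0\uparrow p_1$, $\eps\downarrow 0$ gives the bound. You instead stay entirely inside the machinery of Section 2: you re-use the Taylor-expansion inequality $\frac{1}{n[x_n(\omega)]^\alpha}\geq \frac{2^\alpha}{n}+\frac{n-1}{n}\alpha 2^\alpha A_n(\omega)$ from the proof of Lemma \ref{lem:limsup}, run Hoeffding at a \emph{fixed} threshold (rather than the vanishing thresholds $t_n$ used there for Borel--Cantelli) to get $\mathbb{P}\{c_n(\omega)>K_\eta\}\leq 2\exp(-c\, n)$, and then truncate, killing the bad event with the crude polynomial bound from \eqref{eqn:crude_bounds} (indeed the trivial bound $c_n\leq \tfrac{1}{2}n^{\frac{1}{\alpha}}$ would already do). What each approach buys: the paper's is shorter because it outsources the large-deviation step to \cite{BBD}; yours is self-contained, avoids the external citation, and yields slightly more information, namely an exponential tail for $\{c_n>K_\eta\}$ rather than just a bound on the expectation. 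Two cosmetic points: your fixed threshold $\delta/8$ should really be tuned to absorb the factor $p_1$ in $p_1(1-\delta/2)$ and the prefactor $\frac{1+\alpha}{2}$ on the second sum in \eqref{eqn:A_n} (apply Hoeffding to the $[0,1]$-valued summands and multiply afterwards), so the decay constant is $c(\delta,p_1,\alpha)$; and the independence needed for Proposition \ref{prop:large_deviations} holds because the summands depend on distinct digits $\alpha(\varphi^{j}\omega)$, exactly as in the proof of Lemma \ref{lem:limsup}. Neither point affects the validity of your proof.
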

\begin{proof}
Let $p_0\in(0,p_1)$. By Proposition 4.1 of \cite{BBD} we have
\begin{equation}\label{eqn:expectation_lower}
E_\omega(n^{\frac{1}{\alpha}} x_n(\omega)) \leq n^{\frac{1}{\alpha}}x_{\lfloor{p_0n}\rfloor}(\alpha)+n^{\frac{1}{\alpha}}\text{exp}(-2n(p_1-p_0)^2).
\end{equation}
Let $\eps>0$ be arbitrary. Since $x_n(\alpha) = c_n(\alpha)n^{-\frac{1}{n}}$, with 
$\inf c_n(\alpha) >0$ (see Equation (\ref{eqn:c(alpha)})), for $n$ large enough, \eqref{eqn:expectation_lower} implies
\begin{equation}\label{eqn:expectation}
 E_\omega(n^{\frac{1}{\alpha}} x_n(\omega)) \leq (1+\eps)n^{\frac{1}{\alpha}}x_{\lfloor{p_0n}\rfloor}(\alpha).
\end{equation}
We also know from Equation (\ref{eqn:c(alpha)}) that
\begin{equation}\label{e2}
x_{\lfloor{p_0n}\rfloor}(\alpha)= c_{\lfloor{p_0n}\rfloor}(\alpha){\lfloor{p_0n}\rfloor}^{-\frac{1}{\alpha}},
\end{equation}
with $\lim_{n\to\infty} c_{\lfloor{p_0n}\rfloor}(\alpha)=c(\alpha)$. 
Consequently, by \eqref{eqn:expectation} and \eqref{e2}, we have
$$ E_\omega(n^{\frac{1}{\alpha}} x_n(\omega)) \leq (1+\eps)n^{\frac{1}{\alpha}}c_{\lfloor{p_0n}\rfloor}(\alpha){\lfloor{p_0n}\rfloor}^{-\frac{1}{\alpha}}\le (1+\eps)n^{\frac{1}{\alpha}}c_{\lfloor{p_0n}\rfloor}(\alpha){p_0}^{-\frac{1}{\alpha}}(n-\frac{1}{p_0})^{-\frac{1}{\alpha}}.$$
Thus,
\begin{equation}\label{e3}
 \limsup_{n\to\infty}E_\omega(n^{\frac{1}{\alpha}} x_n(\omega))\leq (1+\eps)c(\alpha)p_0^{-\frac{1}{\alpha}}.
\end{equation}
Since $p_0$ can be taken arbitrarily close to $p_1$, and $\eps>0$ is arbitrary, \eqref{e3} implies $ \limsup_{n\to\infty} E_\omega(n^{\frac{1}{\alpha}} x_n(\omega)) \leq c(\alpha)p_1^{-\frac{1}{\alpha}}$.
\end{proof}
Finally, Lemmas \ref{lem:limsup} and \ref{lem:liminf} combined with Lemma \ref{lem:limsup_exp} give the required $L^1$ convergence due to the following elementary result.

\begin{lemma} \label{lem:L^1_conv} (Also Lemma 4.3 of Gou\"ezel \cite{G2}) 
Let $f_n$ be a sequence of integrable functions on a probability space, with $f_n \geq 0$ and 
$f_n \rightarrow f$ almost everywhere. Suppose that $E( f ) < \infty$ and 
 $\limsup E(f_n ) \leq E( f )$. Then $E(|f_n -f|) \rightarrow 0$ (i.e. $f_n \rightarrow f$ in $L^1$ norm).
 \end{lemma}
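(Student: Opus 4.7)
The plan is to reduce the $L^1$ convergence to a single application of the dominated convergence theorem via the elementary pointwise identity
$$|f_n - f| = (f_n - f) + 2(f - f_n)^+,$$
which one verifies by considering the cases $f_n \geq f$ and $f_n < f$ separately. Integrating this identity over the underlying probability space yields
$$E|f_n - f| = E(f_n) - E(f) + 2\,E\bigl((f-f_n)^+\bigr),$$
so the problem reduces to controlling the two terms on the right.

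The first two terms are handled directly by the hypothesis: $\limsup_n E(f_n) \leq E(f)$ gives $\limsup_n (E(f_n) - E(f)) \leq 0$. For the third term, I would apply dominated convergence. The crucial observation is that the non-negativity $f_n \geq 0$ forces $(f - f_n)^+ \leq f$ pointwise, providing an integrable majorant (since $E(f) < \infty$). Because $f_n \to f$ almost everywhere, we also have $(f - f_n)^+ \to 0$ a.e., and DCT yields $E((f - f_n)^+) \to 0$.

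Combining these two estimates gives $\limsup_n E|f_n - f| \leq 0$, and since $E|f_n - f| \geq 0$, the limit exists and equals $0$. There is essentially no real obstacle here; the lemma is a light reformulation of Scheff\'e's theorem, and the only bookkeeping step is verifying the dominating function for DCT, which is immediate from the sign hypothesis on $f_n$. The whole argument is self-contained in a few lines and does not require the earlier material of the paper.
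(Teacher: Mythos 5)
Your proof is correct and is essentially the same argument as the paper's: since $(f-f_n)^+ = f - \min\{f_n,f\}$, your dominated convergence step (with majorant $f$) is exactly the paper's DCT step applied to $g_n := \min\{f_n,f\}$, and the remaining term is controlled by the hypothesis $\limsup E(f_n)\le E(f)$ in both versions. The only difference is bookkeeping: you package it via the identity $|f_n-f| = (f_n-f) + 2(f-f_n)^+$ rather than via the triangle inequality through $g_n$, which is a perfectly fine, slightly more streamlined write-up of the same Scheff\'e-type argument.
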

 
 We include the proof for completeness. 
 
 \begin{proof}
 Set $g_n := \min\{ f_n, f\} = \frac{1}{2}\{ f_n + f - | f_n - f|\}$.   Then $0 \leq g_n \leq f$ and
 $g_n \rightarrow f$ almost everywhere, so by dominated convergence $E(|g_n - f|) \rightarrow 0$.  From this, we also see $E(g_n)  \rightarrow E(f)$.
 
 Now $|g_n - f_n| = f_n - g_n \geq 0$ so by the first part
\begin{equation*}\begin{split}
\limsup E(|g_n - f_n|) &= \limsup E(f_n) - \liminf E(g_n)\\
&\leq E(f) - E(f)\\
&=0\end{split}
\end{equation*}

It follows that $\lim E(|g_n - f_n|) =0$ which completes the proof.

  \end{proof}
  
\section{Application to limit theorems}

In this section we apply Theorem \ref{thm:asymptotics} to establish limit theorems for the Birkhoff sums $S_n f(x, \omega)$ in (\ref{eqn:Bsum}) when $f: I \times I \rightarrow \mathbb R$ is  
H\"older-continous and $\int f d\nu =0$.  Here, $\nu = h\, dm\times dm$ is the absolutely continuous invariant measure for $S$.

We first observe that for $0<\alpha < \beta < 1$ the induced map $(S^R, \Delta_0)$ is \emph{Gibbs-Markov} for the return-time partition 
$\Delta_{0, i}$ (see Aaronson \cite{Aa1} or Aaronson and Denker \cite{AD} for a definition of Gibbs-Markov).  The required expansion and distortion estimates are derived in \cite{BBD} with respect to the metric $d(z_1, z_2) := \theta^{s(z_1, z_2)}$, for a suitable constant $0 < \theta < 1$, 
 $z_1= (x_1, \omega_1),~ z_2= (x_2, \omega_2)$  and $ s(z_1, z_2)$ being the usual 
 \emph{separation time} of two points in $\Delta_0$ with respect to the return partition. 

Next, we establish local Lipschitz regularity for the density $h$ away from the fixed point. 

\begin{lemma}\label{Lip}
For $0<\alpha<\beta<1$, $h$ has a version that is Lipschitz on any compact subset of $(0,1]\times [0,1]$. 
\end{lemma}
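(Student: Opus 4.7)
The plan is to combine Young's tower approach with the Gibbs--Markov structure of the induced map $S^R:\Delta_0\to\Delta_0$ established in \cite{BBD}. The key observation is that Lipschitz regularity of the full invariant density $h$ on a compact set $K\subset(0,1]\times[0,1]$ bounded away from the neutral line $\{x=0\}$ reduces to Lipschitz regularity of the induced density, because only finitely many ``arm heights'' of the tower can contribute to $h|_K$. First, I would produce the induced density: since $(S^R,\Delta_0)$ is Gibbs--Markov with respect to the return-time partition $\{\Delta^j_{0,i}\}$ (uniform expansion and bounded distortion from \cite{BBD}), a standard Lasota--Yorke argument on a Lipschitz function space over $\Delta_0$ produces a unique $S^R$-invariant probability density $\bar h$ that is uniformly bounded above and below and Lipschitz on each cell $\Delta^j_{0,i}$ with Lipschitz constant uniform in $(i,j)$; let $\bar\nu=\bar h\,d(m\times m)$ be the corresponding induced probability measure.

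Next, I would represent $h$ by the Kac formula
$$
h=\frac{1}{\bar\nu(R)}\sum_{k\ge 0}\mathcal L_S^k\bigl(\bar h\,\mathbf 1_{\{R>k\}}\bigr),
$$
valid since $\bar\nu(R)<\infty$ when $0<\alpha<1$, as a consequence of Theorem \ref{thm:asymptotics} (which yields $m\times m\{R>n\}\sim\tfrac12 c(\alpha)p_1^{-1/\alpha}n^{-1/\alpha}$, summable for $\alpha<1$). Fix a compact $K\subset(0,1]\times[0,1]$ and choose $\delta>0$ with $K\subset[\delta,1]\times[0,1]$. The level-$k$ summand is supported in $\bigcup_{m\ge 1}I_m(\varphi^k\omega)\times(\textnormal{arc})$, where $m=i-k$ for a preimage in $\Delta^j_{0,i}$; the upper bound $x_m(\omega')\le x_m(\beta)\asymp m^{-1/\beta}$ from \eqref{eqn:lsy}--\eqref{eqn:crude_bounds} forces $m\le N(\delta)$ for some finite $N(\delta)$. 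Reindexing by $m$ gives
$$
h|_K=\frac{1}{\bar\nu(R)}\Bigl[\bar h\,\mathbf 1_{\Delta_0\cap K}+\sum_{m=1}^{N(\delta)}\sum_{k\ge 1}\mathcal L_S^k\bigl(\bar h\,\mathbf 1_{\{R=k+m\}}\bigr)\Bigr]\text{ on }K.
$$
The $k=0$ term is Lipschitz on $K\cap\Delta_0$ by the previous step and vanishes on the rest of $K$, so it suffices to show that each of the finitely many inner series in $k$ defines a Lipschitz function on $K$.

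For each cell $\Delta^j_{0,k+m}$, $S^k$ is a smooth bijection onto $I_m(\varphi^k\omega)\times(\textnormal{arc})$, and bounded distortion controls the Lipschitz norm of the pushforward $\mathcal L_S^k\bigl(\bar h\,\mathbf 1_{\Delta^j_{0,k+m}}\bigr)$ in terms of the Lipschitz norm of $\bar h$, the bounded distortion constant, and $\|J_{S^k}^{-1}\|_\infty$ on the cell. Absolute $k$-summability of these Lipschitz norms follows from the polynomial tail $\sum_{k\ge 1}\bar\nu(\{R=k+m\})\le\bar\nu(\{R>m\})<\infty$ combined with bounded distortion. The main obstacle I anticipate is the distortion control for the truncated iterate $S^k$ rather than the full return $S^R=S^{k+m}$: bounded distortion is immediate for $S^R$ via \cite{BBD}, but $S^k$ requires additional bookkeeping. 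The saving grace is that, for $m\le N(\delta)$, the ``missing'' final $m$ branches take values in $I_1\cup\cdots\cup I_m$, uniformly bounded away from the neutral fixed point; the omitted Jacobian factors are therefore uniformly bounded with uniform log-Lipschitz constant, and bounded distortion transfers from $S^R$ to $S^k$ with constants depending only on $N(\delta)$.
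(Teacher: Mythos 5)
Your reduction to the induced Gibbs--Markov map and the Kac/tower representation of $h$ is a legitimate starting point, and the bookkeeping showing that on $K\subseteq[\delta,1]\times[0,1]$ only the levels $m\le N(\delta)$ contribute is correct. The genuine gap is at the sentence ``it suffices to show that each of the finitely many inner series in $k$ defines a Lipschitz function on $K$'': those inner series are not Lipschitz on $K$, and cannot be. The series indexed by $m$ is supported on $\{(y,\omega'):y\in I_m(\omega')\}\cap K$ and is bounded away from $0$ there, so it drops to zero across the internal boundaries $x=x_m(\omega')$ and $x=x_{m+1}(\omega')$, which lie inside $K$; likewise each single term $\mathcal L_S^k\bigl(\bar h\,\mathbf 1_{\{R=k+m\}}\bigr)$ is supported on a finite union of products $I_m(C)\times C$ over length-$m$ cylinders $C$ and jumps at the boundary of its support both in $x$ and in $\omega$ (recall $x_m(\omega')$ is only piecewise constant in $\omega'$). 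What your distortion and summability estimates actually yield is that $h$ agrees a.e.\ with a function that is Lipschitz on each element of a finite partition of $K$ into such product pieces, with uniformly bounded constants. To get Lipschitz regularity on $K$ you must in addition show that the one-sided limits match along all of these boundaries: across $x=1/2$ (the $k=0$ term $\bar h$ against the $m=1$ series), across the lines $x=x_m(\omega')$, and across $\omega$-cylinder boundaries, where the Gibbs--Markov theory gives regularity of $\bar h$ only with respect to the separation-time metric $d_{\theta}$, not Euclidean continuity. This matching is essentially the content of the lemma and is not supplied by cell-wise estimates; indeed, the argument as written would ``prove'' continuity of the projected density for an arbitrary tower, which is false in general (piecewise expanding maps have invariant densities with genuine jumps).

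For comparison, the paper's proof sidesteps all of this: by Theorem 5.2 of \cite{BBQ} the invariant density has the form $h(x,\omega)=g(x)$ with $P_Tg=g$, which settles the $\omega$-direction at once, and the regularity of $g$ on compact subsets of $(0,1]$ comes from invariance, under $P_{T_\alpha}$ and $P_{T_\beta}$ (hence under the convex combination $P_T$), of a cone of nonnegative, decreasing, $C^1$ functions satisfying $\int_0^x f\,dm\le ax^{1-\beta}\int_0^1 f$; a fixed point taken in this cone is automatically locally Lipschitz on $(0,1]$. If you wish to salvage the tower route you would need an explicit boundary-matching argument, e.g.\ exploiting the relations $T_{\alpha(\omega)}\bigl(x_{m+1}(\omega)\bigr)=x_m(\varphi\omega)$ and the change-of-variables identity satisfied by the invariant density, or else derive continuity directly from $P_T$-invariance --- which is precisely what the cone argument packages cleanly.
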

\begin{proof}
By Theorem 5.2 of \cite{BBQ}, we know that the unique invariant density of $S$ is of the form $h=g\times{\bf 1}$ where $P_Tg=g$,  $P_T$ being the transfer operator associated with the random map $T$.  Thus, it is enough to show that $g$ is Lipschitz on compact subsets of $(0,1]$. We prove this fact by studying the action of $P_T$  on a suitable cone. Let $\mathcal{B}$ denote the set of integrable and $C^1$ functions on $(0,1]$. 
For $a>0,$ define a cone $\mathcal C_a$ by
$$\mathcal{C}_{a}=\{f\in\mathcal{B}\mid f\geq 0, f \text{ decreasing}, \int\limits_{0}\limits^{x}fd\lambda\leq ax^{1-\beta}\int_{0}^{1} f\}.$$
Let $a_*= \frac{4}{1-\beta}$. For $a\ge a_*$, it is well known that under the action of $P_{T_{\beta}}$, the transfer operator associated with the map $T_{\beta}$, $\mathcal{C}_{a}$ is invariant. Moreover, since $\alpha<\beta$ the same is true for $P_{T_{\alpha}}$. Since $P_T$ is a convex combination of $P_{T_{\alpha}}$ and $P_{T_{\beta}}$, we conclude that $\mathcal{C}_{a}$ is invariant under the action of $P_T$. Consequently,  $P_T$ has a fixed point in $\mathcal{C}_{a}$ (an invariant density), which we denoted by $g$. Note that $g$ is Lipschitz on any compact subset of $(0,1]$ by the properties of $\mathcal{C}_{a}$.  
\end{proof}

From this point on we assume $h$ is the (unique) Lipschitz version assured by this lemma since this plays a key role in some of the estimates to follow.

Set $A:= \frac{1}{2}c(\alpha)p_1^{-\pow}E_\omega(h(\frac{1}{2}, \omega))$.  
Let $\mathcal N(0, \sigma^2)$ denote the normal distribution with mean zero and variance $\sigma^2$.

\begin{theorem}\label{thm:limits}  Let $0< \alpha < \beta <1$. 
Let $f: I\times I \rightarrow \mathbb R$ be a H\"older continuous function satisfying
$\int f \, d\nu =0$.
Set $c:= E_\omega(f(0, \omega))$. Then
\begin{enumerate}
\item If $\alpha < \frac{1}{2}$, there exists $\sigma^2 \geq 0$  such that 
$$\frac{1}{\sqrt n} S_n f \rightarrow \mathcal{N}(0, \sigma^2).$$
\item  If $\frac{1}{2} \leq \alpha < 1$ and $c=0$, suppose there exists a
$\gamma > \frac{\beta}{\alpha}(\alpha - \frac{1}{2})$ such that 
$|f(x, \omega) - f(0, \omega)| \leq C_f x^\gamma$.  Then there exists
$\sigma^2 \geq 0$ such that 
$$\frac{1}{\sqrt n} S_n f \rightarrow \mathcal{N}(0, \sigma^2).$$
\item
If $\alpha = \frac{1}{2}$ and $c \neq 0$ then $S_nf / \sqrt{c^2 A n \ln n} \rightarrow \mathcal{N}(0,1)$.
\item 
If $\frac{1}{2} < \alpha < 1$ and $c \neq 0$ then $S_nf / n^\alpha  \rightarrow Z$
where the random variable $Z$ has characteristic function given by 
$$E(\exp(itZ)) = \exp\{ -A |c|^\pow \Gamma(1 - \pow) \cos (\pi/2 \alpha)
|t|^\pow (1-i \,\textnormal{sgn}(ct)\tan (\pi/2 \alpha))\}.$$
\end{enumerate}
\end{theorem}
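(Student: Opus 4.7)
The plan is to prove all four parts by inducing on $\Delta_0$ and applying known limit theorems for Gibbs-Markov systems to the induced observable
$$\tilde f(x,\omega) := \sum_{k=0}^{R(x,\omega)-1} f(S^k(x,\omega)),$$
then lifting back to $S$. The key inputs are: (i) $(S^R,\Delta_0,\bar\nu)$ is Gibbs-Markov for the return partition $\{\Delta_{0,i}^j\}$, as noted just before the statement (with $\bar\nu=\nu|_{\Delta_0}/\nu(\Delta_0)$); (ii) the sharp tail estimate $\nu\{R>n\}\sim A n^{-1/\alpha}$ from Theorem \ref{thm:old}; and (iii) the Abramov/Kac identity $\int \tilde f\,d\bar\nu = 0$, which follows from $\int f\,d\nu = 0$.

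The central computation is the decomposition
$$\tilde f = c\,R + \sum_{k=0}^{R-1}\bigl(f(0,\varphi^k\omega)-c\bigr) + \sum_{k=0}^{R-1}\bigl(f(S^k(x,\omega)) - f(0,\varphi^k\omega)\bigr)$$
into a ballistic term, a mean-zero fluctuation, and a regularity remainder. For a point in $\Delta_{0,i}^j$ with $1\le k\le i-1$, the spatial coordinate of $S^k(x,\omega)$ lies in $[0,x_{i-k}(\varphi^k\omega)]$; by (\ref{eqn:crude_bounds}) this is at most $x_{i-k}(\beta)\asymp(i-k)^{-1/\beta}$, so H\"older-$\gamma$ continuity of $f$ bounds the remainder on $\Delta_{0,i}$ by $C\,i^{\max(0,1-\gamma/\beta)}$. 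Combined with $\bar\nu\{R=i\}\asymp i^{-1-1/\alpha}$, a direct summation shows the remainder lies in $L^2(\bar\nu)$ precisely when $\gamma>(\beta/\alpha)(\alpha-1/2)$ --- a negative threshold in case (1), exactly the stated hypothesis in case (2), and automatic in cases (3)--(4). The middle term conditioned on $R=i$ has variance $\asymp i\cdot\mathrm{Var}_\omega f(0,\omega)$, giving $L^2$-norm squared $\asymp\sum_i i\cdot \bar\nu\{R=i\} \asymp \sum_i i^{-1/\alpha}$, which converges for every $\alpha<1$.

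With this decomposition the case analysis reduces to the tail behaviour of $cR$. In parts (1) and (2) we have either $R\in L^2(\bar\nu)$ (since $1/\alpha>2$) or $c=0$, so $\tilde f\in L^2(\bar\nu)$; the CLT for Gibbs-Markov maps with centered $L^2$ observables (Aaronson-Denker \cite{AD}, or via the spectral gap of the induced transfer operator in the symbolic Lipschitz norm) yields $n^{-1/2}\sum_{k=0}^{n-1}\tilde f\circ(S^R)^k\Rightarrow\mathcal N(0,\sigma_0^2)$, and a Melbourne-T\"or\"ok-type lifting (using Kac's formula to convert induced time into original time) transfers this to $S_nf/\sqrt n\Rightarrow\mathcal N(0,\sigma^2)$ with $\sigma^2=\sigma_0^2\,\nu(\Delta_0)$. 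In parts (3) and (4), $c\neq0$ forces $\tilde f$ to be tail-equivalent to $cR$: the decomposition above gives $\bar\nu\{|\tilde f|>t\}\sim A|c|^{1/\alpha}\nu(\Delta_0)^{-1}t^{-1/\alpha}$, which is regularly varying of index $1/\alpha$. Gou\"ezel's limit theorems for Gibbs-Markov maps with regularly-varying return-time tails \cite{G1} then produce the non-standard Gaussian limit with $\sqrt{n\ln n}$ normalisation at the boundary $1/\alpha=2$, and the one-sided $1/\alpha$-stable limit with the stated characteristic function when $1/\alpha\in(1,2)$; the explicit constants are obtained by chasing the tail constant $A|c|^{1/\alpha}$ through the standard Feller-Tauberian formula relating regularly-varying tails to Fourier transforms of totally skewed stable laws.

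The principal obstacles are (a) verifying that $\tilde f$ satisfies the Lipschitz condition in the symbolic metric $d(z_1,z_2)=\theta^{s(z_1,z_2)}$ of the Gibbs-Markov system, which requires combining the uniform distortion of $S^R$ (from \cite{BBD}) with H\"older regularity of $f$ and the quenched estimates of Theorem \ref{thm:asymptotics} to control the oscillation of $\tilde f$ along cylinders; and (b) executing the lifting from induced to original time cleanly when $\tilde f$ is only in $L^p(\bar\nu)$ for $p$ near or below $2$ in parts (2)--(4), where Zweim\"uller's generalization of the Melbourne-T\"or\"ok principle is required. Once these ingredients are in place, the four parts follow by direct application of the corresponding Gibbs-Markov limit theorem to $\tilde f$ followed by the lifting.
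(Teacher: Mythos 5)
Your proposal is correct in substance but assembles the argument from different building blocks than the paper does. The paper verifies the hypotheses of a single packaged result, Theorem 3.1 of Gou\"ezel \cite{G2} (the summable-Lipschitz condition of Lemma \ref{lem:fifty}, the $L^2$/$L^p$ memberships of Lemmas \ref{lem:central} and \ref{lem:5.4}, the tail asymptotics of Lemma \ref{lem:ass_tail}, and the sandwich $\{g_{\Delta_0}>z(1+\epsilon)\}\cap\{|j_{\Delta_0}|\le\epsilon z\}\subseteq\{f_{\Delta_0}>z\}\subseteq\{g_{\Delta_0}>z(1-\epsilon)\}\cup\{|j_{\Delta_0}|>\epsilon z\}$ with $g\equiv c$), and that theorem already delivers the limit law for $S_n f$ directly, with no separate lifting step. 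You instead unpack the pipeline: Aaronson--Denker-type limit theorems for the Gibbs--Markov induced map applied to $f_{\Delta_0}$, followed by a Zweim\"uller \cite{Z2} transfer from induced to original time (correctly identified as necessary in cases (2)--(4), where $R\notin L^2$), with the Kac-formula bookkeeping cancelling the $\nu(\Delta_0)$ factors; your three-term decomposition of $f_{\Delta_0}$ is a refinement of the paper's two-term one, the base-fluctuation piece being exactly what the $p<2/\alpha$ constraint in Lemma \ref{lem:5.4} encodes. What the paper's route buys is that all induced-to-original subtleties (and the constants in cases (3)--(4)) are handled inside \cite{G2}; what your route buys is transparency about where each hypothesis is used, at the price of more delicate steps you currently only sketch. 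Two of these deserve flagging: (i) your claim that the middle term ``conditioned on $R=i$ has variance $\asymp i\,\mathrm{Var}_\omega f(0,\omega)$'' is not immediate, because conditioning on $\{R=i\}$ biases the $\omega$-marginal by $\int_{J_i(\omega)}h\,dx$, which varies strongly over the $2^i$ strings; controlling this requires the Hoeffding/quenched estimates behind Theorem \ref{thm:asymptotics}, and it is precisely the content the paper delegates to Lemma \ref{lem:5.4} (itself proved ``as in \cite{G2}''), so it is a sketch-level gap comparable to the paper's own level of detail rather than a fatal one; (ii) the citation \cite{G1} for stable laws of Gibbs--Markov maps is misplaced --- \cite{G1} concerns correlation decay; the induced-level stable law and the $\sqrt{n\ln n}$ boundary case come from Aaronson--Denker \cite{AD} (or are packaged in \cite{G2}), and the one-sided, totally skewed tail of $f_{\Delta_0}$ (heavy only on the side of $\mathrm{sgn}(c)$, with the opposite tail $o(z^{-\frac{1}{\alpha}})$) must be tracked to get the $\mathrm{sgn}(ct)$ factor in the characteristic function, as the paper does via the sandwich inequality.
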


The proof depends on a number of careful estimates using Theorem \ref{thm:asymptotics} that can be proved in an analogous way to corresponding calculations in Gou\"ezel  \cite{G2}. 

\begin{lemma}\label{lem:ass_tail}
We have $\nu(R>n)\sim n^{-1/\alpha}A$. 
\end{lemma}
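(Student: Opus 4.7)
The plan is to express $\nu\{R>n\}$ as an explicit integral, localize the $x$-integration near $x=1/2$ where the invariant density is controlled by Lemma \ref{Lip}, and then extract the asymptotics from Theorem \ref{thm:asymptotics}. By the construction of the return partition,
\[
\{R>n\} = \{(x,\omega) \in \Delta_0 : 1/2 < x \leq x'_n(\omega)\},
\]
so
\[
\nu\{R>n\} = \int_0^1 \int_{1/2}^{x'_n(\omega)} h(x,\omega)\, dx\, d\omega.
\]
Because $x'_n(\omega) \to 1/2$ uniformly in $\omega$, for $n$ large the inner integration takes place inside a fixed compact subset of $(0,1]\times[0,1]$ on which Lemma \ref{Lip} asserts $h$ is Lipschitz, with some uniform constant $L$. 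A first-order Taylor expansion in $x$ then gives
\[
\int_{1/2}^{x'_n(\omega)} h(x,\omega)\, dx = h(1/2,\omega)\,(x'_n(\omega) - 1/2) + O\!\left((x'_n(\omega) - 1/2)^2\right),
\]
the implicit constant being uniform in $\omega$.

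Using the identity $x'_n(\omega) - 1/2 = x_n(\varphi\omega)/2$ from \eqref{def_backorbit_prime}, the main term reduces to $\tfrac{1}{2}\int_0^1 h(1/2,\omega)\, x_n(\varphi\omega)\, d\omega$. To evaluate its asymptotic, I would combine three ingredients. First, Theorem \ref{thm:asymptotics} supplies the $L^1$ convergence $n^{\pow} x_n \to c(\alpha) p_1^{-\pow}$. Second, since $\varphi$ preserves Lebesgue measure on $[0,1]$ (it is a Bernoulli shift), this $L^1$ convergence immediately transfers to $n^{\pow} x_n\circ\varphi$. Third, $h(1/2,\cdot)$ is bounded on $[0,1]$, being Lipschitz on the compact set $\{1/2\}\times[0,1]$. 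Multiplying the $L^1$-convergent sequence by this bounded weight and integrating yields
\[
n^{\pow} \cdot \frac{1}{2}\int_0^1 h(1/2,\omega)\, x_n(\varphi\omega)\, d\omega \; \longrightarrow \; \frac{1}{2}\, c(\alpha)\, p_1^{-\pow}\, E_\omega\!\left(h(1/2,\omega)\right) = A.
\]

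It remains to show the quadratic error term is genuinely of smaller order than $n^{-\pow}$. Using the crude upper bound $x_n(\omega) \leq x_n(\beta) = O(n^{-1/\beta})$ from \eqref{eqn:crude_bounds} together with the estimate on $E_\omega(x_n(\omega))$ just obtained,
\[
\int_0^1 (x'_n(\omega) - 1/2)^2\, d\omega \leq \tfrac{1}{4}\sup_{\omega} x_n(\omega) \cdot \int_0^1 x_n(\varphi\omega)\, d\omega = O\!\left(n^{-1/\beta - \pow}\right),
\]
which is indeed $o(n^{-\pow})$ and therefore negligible. I do not foresee a serious obstacle: the only delicate point is transferring the convergence of $x_n$ to that of $x_n\circ\varphi$ when weighted against $h(1/2,\cdot)$, and this is handled cleanly by the fact that $\varphi$ preserves Lebesgue measure and that $h(1/2,\cdot)$ is bounded. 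Everything else is a direct combination of the Taylor expansion with the sharp tail estimate provided by Theorem \ref{thm:asymptotics}.
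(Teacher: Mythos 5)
Your argument is correct and is exactly the computation the paper has in mind (the paper omits the details, referring to analogous calculations in Gou\"ezel's work): write $\nu\{R>n\}=\int\int_{1/2}^{x'_n(\omega)}h$, use the Lipschitz version of $h$ from Lemma \ref{Lip} near $x=1/2$, the identity $x'_n(\omega)-1/2=x_n(\varphi\omega)/2$, and the $L^1$ convergence in Theorem \ref{thm:asymptotics} (transferred through the Lebesgue-measure-preserving map $\varphi$) to get the constant $A$, with the quadratic error killed by the crude bound $x_n(\omega)\le x_n(\beta)$. Your handling of the error term via $\sup_\omega x_n\cdot E_\omega(x_n)=O(n^{-1/\beta-1/\alpha})$ is the right choice, since the cruder $O(n^{-2/\beta})$ bound would not suffice when $\beta>2\alpha$.
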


\begin{notation}  For $f : [0,1] \times [0,1] \rightarrow \mathbb R$ and $(x,\omega) \in \Delta_0$ define
$$f_{\Delta_0} (x, \omega) := \sum_{k=0}^{R(x, \omega)-1} f(S^k(x, \omega)).$$
\end{notation}
\begin{lemma}\label{lem:central}
Let $f$ be H\"older on $[0,1]\times[0,1]$. If $0<\alpha<1/2$, then $f_{\Delta_0} \in L^2(\Delta_0, d\nu)$. 
\end{lemma}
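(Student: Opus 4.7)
The plan is to use the crude bound $|f_{\Delta_0}(x,\omega)| \le \|f\|_\infty\, R(x,\omega)$, which holds because $f$ is H\"older (hence bounded) on the compact domain $[0,1]\times[0,1]$. This reduces the lemma to showing that $R \in L^2(\Delta_0, d\nu)$, at which point we are done since $\nu(\Delta_0) < \infty$ and the desired integral satisfies
\[
\int_{\Delta_0} f_{\Delta_0}^2 \, d\nu \le \|f\|_\infty^2 \int_{\Delta_0} R^2 \, d\nu.
\]

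To estimate the latter, I would rewrite
\[
\int_{\Delta_0} R^2 \, d\nu = \sum_{n \geq 1} n^2 \, \nu(R=n) = \sum_{n \geq 1}(2n-1)\, \nu(R \ge n),
\]
and apply the sharp tail asymptotic in Lemma \ref{lem:ass_tail}, which gives $\nu(R > n) \sim A\, n^{-1/\alpha}$. Thus there exists a constant $C>0$ such that
\[
\int_{\Delta_0} R^2\, d\nu \le C + C \sum_{n \ge 1} n \cdot n^{-1/\alpha} = C + C\sum_{n \ge 1} n^{1-1/\alpha},
\]
and this series converges precisely when $1 - 1/\alpha < -1$, i.e.\ when $\alpha < 1/2$. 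Hence $R \in L^2(\Delta_0, d\nu)$ under the hypothesis, and consequently $f_{\Delta_0} \in L^2(\Delta_0,d\nu)$.

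No serious obstacle arises here: the only nontrivial input is the sharp tail bound $\nu(R>n) \sim A\,n^{-1/\alpha}$ from Lemma \ref{lem:ass_tail}, which in turn rests on Theorem \ref{thm:asymptotics} together with the Lipschitz regularity of $h$ (Lemma \ref{Lip}); both are already available at this point in the paper. Note that for H\"older $f$ with $\int f\, d\nu = 0$ one could hope to exploit cancellation and extend the range of $\alpha$, but the lemma as stated requires only the pointwise-in-$R$ bound, which is exactly what the computation above delivers.
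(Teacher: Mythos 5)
Your proof is correct and is essentially the argument the paper intends (the paper omits the proof, deferring to the analogous computation in Gou\"ezel \cite{G2}): bound $|f_{\Delta_0}|\le \|f\|_\infty R$ and then sum the tail $\nu(R>n)\sim A n^{-1/\alpha}$ from Lemma \ref{lem:ass_tail}, which is summable against $n$ exactly when $\alpha<1/2$. Note only the upper bound $\nu(R>n)\le C n^{-1/\alpha}$ is actually needed, and this is consistent with the paper's later use of the lemma with $g\equiv 1$ to conclude $R\in L^2$.
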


\begin{lemma}\label{lem:5.4}
Suppose $f: X \rightarrow \mathbb R$ is H\"older continuous with 
$E_\omega(f(0,\omega))=0$.  Suppose there are constants $0< \gamma < \beta$ and  $C_f < \infty$ such that, uniformly in $\omega$, for all 
$x \in [0,1]$ 
$$ | f(x, \omega) - f(0, \omega) | \leq C_f x^\gamma.$$
Let $1 \leq p < \min\{ 2/ \alpha, 1/ \alpha(1-\frac{\gamma}{\beta})\}$.  Then 
$f_{\Delta_0} \in L^p(\Delta_0, \nu)$. 

\end{lemma}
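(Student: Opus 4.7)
My plan is to decompose $f(x,\omega) = g(\omega) + \tilde f(x,\omega)$, where $g(\omega) := f(0,\omega)$ satisfies $\int g\,dm = 0$ by hypothesis, and $\tilde f := f - g$ inherits the vanishing estimate $|\tilde f(x,\omega)| \le C_f\, x^\gamma$. Since $f_{\Delta_0} = g_{\Delta_0} + \tilde f_{\Delta_0}$, it suffices to show each summand lies in $L^p(\Delta_0,\nu)$: the $\tilde f$-part will produce the condition $p < 1/(\alpha(1-\gamma/\beta))$, and the $g$-part will produce the condition $p < 2/\alpha$.

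For the spatial contribution, the key tool is a uniform pointwise orbit estimate. For $(x,\omega)\in\{R=n\}$ and $k = 1, \dots, n-1$, the iterate $T^k_\omega(x)$ lies in $I_{n-k}(\varphi^k\omega)\subset (0,1/2]$, so the crude bound \eqref{eqn:crude_bounds} together with $x_m(\beta)\asymp m^{-1/\beta}$ yields $T^k_\omega(x) \le C(n-k)^{-1/\beta}$. Plugging this into the H\"older-type bound for $\tilde f$ gives
$$|\tilde f_{\Delta_0}(x,\omega)| \le C_f + C\sum_{k=1}^{n-1}(n-k)^{-\gamma/\beta} \le C'\,n^{1-\gamma/\beta},$$
the final estimate using $\gamma/\beta < 1$. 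Combined with the tail asymptotic $\nu(R=n)\asymp n^{-1/\alpha-1}$ (an immediate consequence of Lemma \ref{lem:ass_tail}), this bounds $\int |\tilde f_{\Delta_0}|^p\,d\nu$ by $C\sum_n n^{p(1-\gamma/\beta) - 1/\alpha - 1}$, which converges exactly when $p < 1/(\alpha(1-\gamma/\beta))$.

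For the noise contribution, observe that on $\{R=n\}$ one has $g_{\Delta_0}(x,\omega) = S_n g(\omega):=\sum_{k=0}^{n-1} g(\varphi^k\omega)$, the Birkhoff sum of a H\"older, mean-zero observable under the Bernoulli shift $\varphi$. Standard spectral-gap/Gordin martingale arguments combined with Burkholder's inequality yield $\|S_n g\|_{L^p(m)} \le C_p\sqrt n$ for every $p\ge 1$. Using that the density $h$ is bounded above on $\Delta_0$, it suffices to show
$$\sum_n \int_0^1 |S_n g(\omega)|^p\, m(J_n(\omega))\,dm(\omega) < \infty,$$
with the target order being $n^{p/2}\cdot \nu(R=n) \asymp n^{p/2 - 1/\alpha - 1}$ per term, yielding the condition $p < 2/\alpha$.

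The chief obstacle is this final bound. The weight $m(J_n(\omega)) = 2^{\alpha(\omega)-1}x_n(\omega)^{1+\alpha(\omega)}$ is \emph{not} independent of $S_n g(\omega)$ -- both are measurable with respect to the first $n$ symbols of the Bernoulli coding of $\omega$ -- so a naive H\"older or Cauchy--Schwarz split falls short of the sharp exponent $2/\alpha$. My plan is to invoke Theorem \ref{thm:asymptotics} to restrict attention to the ``good'' $\omega$-set on which $n^{1/\alpha}x_n(\omega)$ is within a prescribed neighborhood of its deterministic limit $c(\alpha)p_1^{-1/\alpha}$ while simultaneously $|S_n g(\omega)|\le K\sqrt{n\log n}$; on this set the product is manifestly controlled by the target quantity, and the complementary bad set is handled by combining the large-deviation estimate of Proposition \ref{prop:large_deviations} (governing $x_n$) with the subgaussian tail of $S_n g$. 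This is the random analogue of Gou\"ezel's argument in \cite{G2}.
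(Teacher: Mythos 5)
The paper itself does not write out a proof of this lemma (it is stated with the remark that it follows the corresponding computation in Gou\"ezel \cite{G2}), so your sketch has to stand on its own. Your overall architecture is the intended one: split $f(x,\omega)=f(0,\omega)+\tilde f(x,\omega)$, and your treatment of $\tilde f$ is correct -- on $\{R=n\}$ the orbit bound $T^k_\omega x\le x_{n-k}(\beta)\le C(n-k)^{-1/\beta}$ gives $|\tilde f_{\Delta_0}|\le Cn^{1-\gamma/\beta}$, and summing against the return-time tails (best done by Abel summation against $\nu\{R>n\}$, since Lemma \ref{lem:ass_tail} gives tails, not per-term asymptotics) yields exactly $p<1/(\alpha(1-\gamma/\beta))$.

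The noise part, however, has genuine gaps. First, $\nu\{R=n\}\asymp n^{-\frac{1}{\alpha}-1}$ is \emph{not} an immediate consequence of Lemma \ref{lem:ass_tail}: tail asymptotics never give individual-term asymptotics without extra structure. The upper bound $\nu\{R=n\}\le Cn^{-\frac{1}{\alpha}-1}$ (which is all you need) is true, but requires an argument, e.g.\ note $m(J_n(\omega))=\tfrac12\bigl(x_{n-1}(\varphi\omega)-x_n(\varphi\omega)\bigr)=\tfrac12 m(I_{n-1}(\varphi\omega))$ (your closed formula $2^{\alpha(\omega)-1}x_n(\omega)^{1+\alpha(\omega)}$ mixes the fibers $\omega$ and $\varphi\omega$), that $T_{\alpha(\omega)}$ expands $I_n(\omega)$ onto $I_{n-1}(\varphi\omega)$, and that $m$ is $\varphi$-invariant, so $E_\omega[m(I_n)]$ is non-increasing and $E_\omega[m(I_{2n})]\le \tfrac1n E_\omega[x_{n+1}]\le Cn^{-\frac{1}{\alpha}-1}$. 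Second, and more seriously, your decoupling mechanism cannot reach the exponent $2/\alpha$. Knowing on a good set that $n^{\frac{1}{\alpha}}x_n(\omega)$ lies near $c(\alpha)p_1^{-\frac{1}{\alpha}}$ controls $x_{n-1}(\varphi\omega)$ and $x_n(\varphi\omega)$ only up to errors $o(n^{-\frac{1}{\alpha}})$ (even the quantitative Hoeffding argument of Section 2 leaves errors of order about $n^{-\frac{1}{\alpha}-\frac13}$), while $m(J_n(\omega))$ is the \emph{difference} of these two quantities; the errors swamp the target scale $n^{-\frac{1}{\alpha}-1}$, so on your good set the per-term bound is only of order $n^{p/2-\frac{1}{\alpha}}$ (up to logs and the $n^{-1/3}$ correction), which is summable only for $p$ well below $2/\alpha$ -- nothing here is ``manifest.'' The sharp exponent requires breaking the dependence differently: either split only on $\{|S_ng|\le K\sqrt{n\log n}\}$ and pair the good event with the expectation bound $E_\omega[m(J_n)]\le Cn^{-\frac{1}{\alpha}-1}$ above, or Abel-sum in $n$ using $\sum_{m\ge n}m(J_m(\omega))=\tfrac12 x_{n-1}(\varphi\omega)$ together with $E_\omega[x_n]\asymp n^{-\frac{1}{\alpha}}$ from Theorem \ref{thm:asymptotics}. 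Finally, on the bad set the only pointwise bound available is the crude $m(J_n(\omega))\le Cn^{-1/\beta}$, so you must beat a polynomial gap of order roughly $n^{\frac{1}{\alpha}+1-\frac{1}{\beta}}$: the Burkholder/Gordin moment bounds $\|S_ng\|_{L^p}\le C_p\sqrt n$ you cite give only polylogarithmic decay at the threshold $K\sqrt{n\log n}$ and do not suffice; you need the genuinely subgaussian concentration you assert at the end, which does hold here (Azuma/McDiarmid over the i.i.d.\ base, since $g=f(0,\cdot)$ is H\"older and the contribution of each symbol to $S_ng$ is summable), but it must be invoked as the actual tool rather than as an afterthought.
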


We are going to work with a family of metrics based on separation time. For 
$2 \leq \lambda< \infty$, the expansion constant for $S^R$ and any $0<\theta <1$ define 
$$d_{\lambda^{-\theta}}(z_1, z_2) := \lambda^{-\theta s(z_1, z_2)}.$$
It follows that with respect to $d_{\lambda^{-\theta}}$, the expansion constant 
of $S^R$ is at least $\lambda^{\theta} > 1$, and that $S^R$ is Gibbs-Markov for 
this metric and the partition $\Delta_{0,i}^s$.

Let $0<\theta <1$ be the H\"older exponent of $f$.    For each $n, s$ let 
$Df_{\Delta_0}(\Delta_{0,{n}}^s)$ denote the Lipschitz constant of $f_{\Delta_0}$ restricted to the subset 
$\Delta_{0,n}^s$ and computed with respect to the metric $d_{\lambda^{-\theta}}$.

\begin{lemma}\label{lem:fifty}
$$ \sum_{n, s} \nu( \Delta_{0,{n}}^s)Df_{\Delta_0}(\Delta_{0,{n}}^s) 
\leq C\sum_n \nu\{ R =n\} n < \infty.$$
\end{lemma}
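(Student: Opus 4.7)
The plan is to establish a uniform bound $Df_{\Delta_0}(\Delta_{0,n}^s)\le Cn$ independent of the label $s$, and then reduce the double sum to $\sum_n n\,\nu\{R=n\}$, whose finiteness is immediate from Lemma \ref{lem:ass_tail}.

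First, I would fix a cell $\Delta_{0,n}^s$ and two points $z_1,z_2$ in it (so $R(z_i)=n$), then expand
$$|f_{\Delta_0}(z_1)-f_{\Delta_0}(z_2)|\le\sum_{k=0}^{n-1}|f(S^kz_1)-f(S^kz_2)|\le|f|_\theta\sum_{k=0}^{n-1}d(S^kz_1,S^kz_2)^\theta,$$
using H\"older continuity of $f$ with exponent $\theta$ in the Euclidean metric $d$. The task then reduces to showing $d(S^kz_1,S^kz_2)^\theta\le C\,d_{\lambda^{-\theta}}(z_1,z_2)$ uniformly in $0\le k\le n-1$. This I would obtain in two steps: (i) the single-excursion estimate $d(S^kz_1,S^kz_2)\le C_0\,d(S^Rz_1,S^Rz_2)$ for $0\le k\le n$, a consequence of bounded distortion of $S^R$ on the full cell $\Delta_{0,n}^s$; and (ii) the Gibbs-Markov bound $d(S^Rz_1,S^Rz_2)\le C_1\lambda^{-(s(z_1,z_2)-1)}$, coming from the fact that $S^R$ expands $d_{\lambda^{-1}}$ by the factor $\lambda$. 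Combining and raising to the power $\theta$ gives the required uniform bound on each summand, and hence $Df_{\Delta_0}(\Delta_{0,n}^s)\le C|f|_\theta\,n$.

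Grouping cells with common return time, I would then obtain
$$\sum_{n,s}\nu(\Delta_{0,n}^s)\,Df_{\Delta_0}(\Delta_{0,n}^s)\le C|f|_\theta\sum_n n\,\nu\{R=n\}=C|f|_\theta\sum_{n\ge 0}\nu\{R>n\}$$
by Abel summation. Finiteness of the right-hand side follows from Lemma \ref{lem:ass_tail}: since $\nu\{R>n\}\sim An^{-1/\alpha}$ and the standing hypothesis is $0<\alpha<1$, we have $1/\alpha>1$ and the tail series converges (the boundary term $n\,\nu\{R\ge n+1\}$ in the Abel step decays like $n^{1-1/\alpha}\to 0$ for the same reason).

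The main obstacle is the single-excursion distance estimate in step (i). In the LSV setting the forward derivatives along an excursion can be arbitrarily close to $1$ near the fixed point, so no naive uniform expansion argument applies along the orbit. The standard remedy is to deduce the intermediate-time distance bound from bounded distortion of $S^R$ on the whole cell, via the ratio of Jacobians $|DS^{n-k}|/|DS^n|$ at a reference point. In the random setting one additionally needs this distortion bound to be uniform in the $\omega$-label of the cell, but this uniformity is already encoded in the Gibbs-Markov estimates developed for the present skew product in \cite{BBD}, so in practice the argument reduces to a careful appeal to that work.
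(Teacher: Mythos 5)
Your argument is correct and is essentially the proof the paper intends: the lemma is stated without proof, deferring to the analogous computation in Gou\"ezel \cite{G2}, and your route (a uniform bound $Df_{\Delta_0}(\Delta_{0,n}^s)\le Cn$ obtained by comparing intermediate-time distances with the return-time distance and using exponential decay of cylinder diameters in separation time, then $\sum_n n\,\nu\{R=n\}=\sum_{n\ge 0}\nu\{R>n\}\asymp\sum_n n^{-1/\alpha}<\infty$ because $\alpha<1$ by Lemma \ref{lem:ass_tail}) is exactly that computation. The only remark is that the ``main obstacle'' you identify in step (i) dissolves here: every branch of $T_\alpha$, $T_\beta$ and of $\varphi$ has derivative at least $1$, and both orbits follow the same symbolic itinerary during the excursion, so the map carrying $S^k z_i$ to $S^R z_i$ is a (weak) expansion on the cell and $d(S^k z_1,S^k z_2)\le d(S^R z_1,S^R z_2)$ holds with constant $C_0=1$, no distortion estimate from \cite{BBD} being needed for this particular bound.
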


\subsection{Proof of Theorem \ref{thm:limits}}
 
We are going to use Theorem 3.1 of Gou\"ezel \cite{G2}.  The basic finite expectation  condition (Equation  (18)  in \cite{G2}) is given in our setting by Lemma \ref{lem:fifty}.

Assume first that $\alpha < \frac{1}{2}$ .  Then by Lemma \ref{lem:central} we know that
$f_{\Delta_0} \in L^2(\Delta_0, \nu)$.  Also the return time function $R \in L^2$ since 
$R=g_{\Delta_0}$ with $g \equiv 1$, to which Lemma \ref{lem:central} also applies.
This is the setting of the first case of Theorem 3.1, so we obtain the central limit theorem in (1). 

Next we consider $1/2 \leq \alpha <1$ and $c=0$. Now Lemma \ref{lem:5.4} with conditions given on $\gamma$ shows that $f_{\Delta_0} \in L^2$ and the 
estimate in Lemma \ref{lem:ass_tail} shows that 
$$\nu\{ R> n\} = n^{-\pow}A(n) \sim n^{-\pow}A.$$
For $z\in (0,\infty)$ we get $\nu\{ R> z\} = \ceil{z}^{-\pow}A(\ceil{z})$.
Set $\Lambda(z):=  \bigl(\ceil{z}/z \bigr )^{-\pow}A(\ceil{z})$.  Note that 
$z \rightarrow \Lambda(z)$ is slowly varying, $\Lambda(z) \sim A$ and  $\nu\{ R> z\} = z^{-\pow}\Lambda(z)$.
The second sub-condition in the first case of Theorem 3.1 \cite{G2} are therefore satisfied with $L:= \Lambda$ and we again get a central limit theorem in (2). 

The last two cases require a more detailed estimate.
Assume $\frac{1}{2} \leq \alpha <1$.   Set $g \equiv c$ and note that 
on $g_{\Delta_0}(x,\omega) = cn \iff R(x,\omega)=n$, so $\nu\{ |g_{\Delta_0}| >z\} \sim
 |c|^{\pow}z^{-\pow}\Lambda(z)$ according to the previous calculation.    The function $j=f-g$ has the same regularity as 
$f$ and satisfies $E_\omega(j(0,\omega))=0. $
We next show that $\nu\{|j_{\Delta_0}| >z \} = o(z^{-\pow})$.   Applying 
Lemma \ref{lem:5.4} to $j$, we obtain $p> \pow$ such that $j_{\Delta_0} \in L^p( \Delta_0)$.
It follows that 
$$ \nu\{ |j_{\Delta_0}| > z \} \leq \int \bigl (| j_{\Delta_0}|/z\bigr )^p \, d\nu = Cz^{-p} =o\bigl ( z^{-\pow} \bigr).$$
The elementary decomposition
$$ \{ g_{\Delta_0} > z(1+\epsilon)\} \cap \{ | j_{\Delta_0}| \leq \epsilon z \} 
\subseteq \{ f_{\Delta_0} > z\} \subseteq \{ g_{\Delta_0} > z(1-\epsilon)\} \cup\{ | j_{\Delta_0}| >\epsilon z\}$$
implies  
\begin{equation}\label{eqn:tricky}\begin{split}
\nu \{ g_{\Delta_0} > z(1+\epsilon)\} - \nu \{ | j_{\Delta_0}| > \epsilon z\} 
&\leq \nu \{ f_{\Delta_0} > z\} \\ 
&\leq \nu\{ g_{\Delta_0} > z(1-\epsilon)\} + \nu \{ | j_{\Delta_0}| >\epsilon z\}.\\
\end{split}
\end{equation}

Now consider the case $\alpha = \frac{1}{2}$ in (3). Assume $c>0$ and use
 the asymptotic estimates on the upper and lower bounds in Equation (\ref{eqn:tricky}) to obtain
$\nu\{ f_{\Delta_0} >z \} \sim z^{-2} (c^2 A + o(1))$.   On the other hand, if $c<0$ then 
$g<0$ and $\{ g_{\Delta_0} > z(1\pm \epsilon)\} = \emptyset$, so 
$\nu\{ f_{\Delta_0} >z \} \leq \nu\{ |j_{\Delta_0}| > \epsilon z \} \sim z^{-2} o(1)$. Combining these two estimates yields 
$\nu\{| f_{\Delta_0} |>z \} \sim z^{-2} (c^2 A + o(1)):= z^{-2}l(z)$, independent of the sign of $c$.  The only difference is that for $c>0$ the tail distribution is heavy for postive $z$ while for $c<0$ it is heavy for negative $z$.  We have already established
that $\nu\{ R>z\} \sim z^{-2}A = z^{-2}(1/c^2)l(z)$.  We can now apply the third case of  Theorem 3.1 \cite{G2} with $L(z) = 2c^2A \int_1^z \frac{1}{u} \,du = 2c^2A \ln z$ (unbounded and slowly varying)
and $B_n := \sqrt{c^2 A n \ln n}$, whereby $nL(B_n) \sim B_n^2$ as required. 

For the final case, when $\frac{1}{2} < \alpha <1$ we return to the estimate in Equation (\ref{eqn:tricky}) and again, we first assume $c>0$, so that $g > 0$.  For 
$z >0$ the asymptotic estimates yield
$$\nu\{ f_{\Delta_0} >z \} \sim z^{-\pow} (c^\pow A + o(1))$$
while we have already established that
$\nu\{ R>z\} \sim z^{-\pow}A$.
On the other hand, 
$$\nu\{ f_{\Delta_0} < -z \} \leq \nu \{ |j_{\Delta_0}| > 
\epsilon z\}  = o\bigl( z^{-\pow} \bigr).$$
We can therefore apply the last case in Theorem 3.1, \cite{G2}, setting $c_1 = c^{\pow}A$, $c_2=0$,  $c_3 = A$,  $L \equiv 1$ and $B_n := n^\alpha$.  
In the case $c<0$ we simply exchange the values of $c_1$ and $c_2$.  Putting this together, the theorem gives an asymptotic stable law with characteristic function 
$$\exp\bigl[ -c^\pow A \Gamma(1-\pow)\cos\bigl(\frac{\pi}{2 \alpha}\bigr) |t|^\pow
\bigl(1-i \,\textnormal{sgn}(c)\,\textnormal{sgn}(t)\tan\bigl(\frac{\pi}{2 \alpha}\bigr)\bigr)\bigr],$$
which is case (4) in our theorem. 
This completes the proof.

 \section{Application to correlation asymptotics for infinite measure preserving random systems}
In this section we use arguments analogous to Theorem \ref{thm:asymptotics} to study the asymptotics of the transfer operator associated with the skew product of a piecewise affine version of the random model discussed in the pervious sections. In particular, we will consider\footnote{We consider a linearized version because for general random LSV transformations we were able to prove bounded distortion only when $0<\alpha\le\beta\le 1$. See \cite{BBD} for the result and a discussion on distortion.} \begin{equation*}
\tilde T_{\alpha(\omega)}(x)=\begin{cases}
       \frac{x_{n-2}(\varphi \omega)-x_{n-1}(\varphi\omega)}{x_{n-1}(\omega)-x_{n}(\omega)} (x- x_{n}(\omega)) + x_{n-1}(\varphi\omega),\\ 
\hskip 4cm \textnormal{ for }x \in(x_{n}(\omega),x_{n-1}(\omega)], n= 1, 2, \dots\\
       2x-1, \quad \quad \quad \hskip 1.6cm\textnormal{ for } x\in(\frac{1}{2},1].
       \end{cases}
\end{equation*}
Let 
\begin{equation}\label{GW}
S(x,w):=(\tilde T_{\alpha(\omega)}(x),\varphi\omega)
\end{equation}
denote the associated skew product. As in Subsection \ref{sec:inducing}, we induce $S$ on $\Delta_0$. Thus, by our theorem, we can apply Theorem 1.4 of\footnote{For more general observables, one can use the result of \cite{MT}. However, one would lose uniform convergence for all parameters. See \cite{MT, G3} for a discussion.} \cite{G3} to obtain asymptotics of $\mathcal L_S^n$. In particular, we obtain the following theorem.
\begin{theorem}\label{infinite}
Let $S$ be the skew product defined using \eqref{GW} with $1\le\alpha<\beta<\infty$. The following hold:
\begin{enumerate}
\item $S$ admits a unique (subject to the normalizing condition $\nu( \Delta_0)=1$) absolutely continuous invariant infinite ($\sigma$-finite) measure $\nu$;
\item Let $\mathcal L_{\nu}$ be  the transfer operator associated with $S$ with respect to the invariant measure $\nu$. For $\alpha >1$, let $f$ be a Lipschitz function supported on $\Delta_0$. Then 
$$\lim_{n\to\infty}|| n^{1-\frac{1}{\alpha}}{\bf 1}_{\Delta_0}\mathcal L_{\nu}^n f-c\int_{\Delta_0}f||_{\infty}=0,$$
where $c$ is a constant independent of $f$. In particular,  if $g\in L^1(\Delta_0)$, we have 
$$\lim_{n\to\infty}n^{1-\frac{1}{\alpha}}\int_{\Delta_0}f\cdot g\circ S^n= c\int_{\Delta_0}f\int_{\Delta_0}g.$$
\item For $\alpha=1$, and $f$ a Lipschitz function supported on $\Delta_0$, we obtain the same results as in (2) with normalizing sequence $\ln n$ instead of $n^{1-\pow}$. 
\end{enumerate}
\end{theorem}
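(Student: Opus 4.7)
The plan is to reduce Theorem \ref{infinite} to an operator renewal statement by inducing the piecewise affine skew product $S$ of \eqref{GW} on the right half-square $\Delta_0$, and then invoking Theorem 1.4 of \cite{G3}.  First I would carry out the inducing construction of Section \ref{sec:inducing} verbatim, obtaining the return-time function $R$ and return map $S^R:\Delta_0\to\Delta_0$.  Since $\tilde T_{\alpha(\omega)}$ is affine on every interval $(x_n(\omega), x_{n-1}(\omega)]$, each induced branch $S^R:\Delta^j_{0,i}\to\Delta_0$ is an exact affine bijection, so distortion is identically constant on every cell.  With respect to the separation-time metric $d_{\lambda^{-\theta}}$ of Section 3 the induced system $(\Delta_0,S^R)$ is then trivially Gibbs--Markov for the partition $\{\Delta^j_{0,i}\}_{i,j}$; standard theory (Aaronson--Denker \cite{AD}) produces a unique absolutely continuous $S^R$-invariant probability measure $\nu_0=h_0\,d(m\times m)$ on $\Delta_0$, with $h_0$ bounded away from $0$ and $\infty$ and Lipschitz in $d_{\lambda^{-\theta}}$.

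Next I would extend $\nu_0$ to a $\sigma$-finite $S$-invariant measure on $I\times I$ via the Kakutani tower
$$\nu\,:=\,\sum_{k=0}^{\infty}\,S_{*}^{k}\bigl(\,\nu_0|_{\{R>k\}}\,\bigr).$$
Using $x_n'(\omega)-\tfrac12=\tfrac12\,x_n(\varphi\omega)$, continuity of $h_0$ up to the edge $x=\tfrac12$, and the $L^1$ convergence of Theorem \ref{thm:asymptotics}, the return-time tail satisfies
$$\nu_0\{R>n\}\,\sim\,A'\,n^{-\pow},\qquad A':=\tfrac14\,c(\alpha)\,p_1^{-\pow}\,E_\omega\bigl(h_0(\tfrac12,\omega)\bigr).$$
Since $1/\alpha\le 1$ for $\alpha\ge 1$, we have $\sum_n\nu_0\{R>n\}=\infty$, so $\nu$ is infinite but $\sigma$-finite on $I\times I$; this establishes statement (1), with uniqueness up to scaling inherited from the ergodicity of $S^R$.

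Finally, with $(\Delta_0,S^R,R,\nu_0)$ Gibbs--Markov and with $\nu_0\{R>n\}\sim A'n^{-\pow}$ (trivially constant slowly varying factor), I would apply Theorem 1.4 of \cite{G3} to the induced system.  For any Lipschitz $f$ supported on $\Delta_0$ the conclusion reads
$$\bigl\|\,n^{1-\pow}\,\mathbf 1_{\Delta_0}\,\mathcal L_{\nu}^n f\;-\;c\!\int_{\Delta_0}f\,\bigr\|_\infty\longrightarrow 0$$
when $\alpha>1$, with normalizing sequence replaced by $\ln n$ when $\alpha=1$, and $c$ depending only on $\alpha$ and $A'$.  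The pointwise integral version in (2) then follows by the duality $\int f\cdot g\circ S^n\,d\nu=\int\mathcal L_\nu^n f\cdot g\,d\nu$ combined with the uniform convergence just established.

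The principal obstacle is the one hypothesis of \cite{G3} not immediate from Theorem \ref{thm:asymptotics}, namely the \emph{aperiodicity} (non-arithmeticity modulo coboundaries) of the return-time function $R$ under $S^R$: one must exclude $a\in\mathbb R\setminus 2\pi\mathbb Z$ and Lipschitz $\psi:\Delta_0\to\mathbb C^{\times}$ with $e^{iaR}=\psi\circ S^R/\psi$.  The natural approach is to compare closed return-time cycles built from purely-$\alpha$ and purely-$\beta$ symbolic sequences inside suitable cells $\Delta^{j}_{0,i}$ and to check that the associated return totals are incommensurate; the $2^i$-fold multiplicity of cells at each level $i$ affords ample flexibility, but the verification is an explicit symbolic computation that lies outside the scope of the asymptotic estimates of Section 2.
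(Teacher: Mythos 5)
Your route is essentially the paper's: induce $S$ on $\Delta_0$, use the affine full-branch structure to invoke Aaronson--Denker for a unique mixing a.c.i.p.\ $\nu_{\Delta_0}$ with a spectral gap for $\mathcal L_{S^R}$ on Lipschitz functions, build $\nu$ as the tower (Kakutani) measure, which is infinite precisely because $\sum_n \nu_{\Delta_0}\{R>n\}=\infty$ when $\alpha\geq 1$ by Theorem \ref{thm:asymptotics}, and then feed the tail asymptotics $\nu\{R>n\}\sim \textnormal{const}\cdot n^{-\pow}$ into Gou\"ezel's renewal-type result. The one genuine gap is the hypothesis you defer: aperiodicity. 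Contrary to your assessment, it does not require any symbolic computation with $\alpha$/$\beta$-cycles and it is not outside the scope of the tools already in play. Every cell $\Delta_{0,i}^j$ is mapped bijectively (indeed affinely) onto all of $\Delta_0$ and $R$ is constant on cells, taking every integer value $n\geq 1$; evaluating a putative relation $e^{iaR}=\psi\circ S^R/\psi$ at fixed points of full branches with consecutive return times forces $a\in 2\pi\mathbb Z$. Equivalently, and this is how the paper dispatches the point, the spectral gap and mixing of $(S^R,\nu_{\Delta_0})$ furnished by Aaronson--Denker already imply that the renewal sequence $R_nf={\bf 1}_{\Delta_0}\mathcal L_\nu^n({\bf 1}_{\{R=n\}}f)$ is aperiodic in Sarig's sense. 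As written, your proof is incomplete at exactly this hypothesis, and the verification you sketch (incommensurability of cycle totals) is both unnecessary and harder than the standard argument.

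Two further discrepancies with the paper's proof are worth recording. First, besides the tail asymptotics, the renewal scheme also needs decay of the renewal operators themselves; the paper checks $\|R_n\|_{\text{Lip}}\leq C\,\nu\{R=n\}\lesssim n^{-1-\pow}$ via inequality (8) on p.~649 of Sarig \cite{S1}, exploiting the (here affine) Gibbs--Markov structure --- you omit this hypothesis entirely. Second, for case (3) the paper does not use \cite{G3} at all: at the boundary $\alpha=1$ the normalization degenerates to $\ln n$, and the paper instead invokes part (a) of Theorem 1.1 of Melbourne--Terhesiu \cite{MT}; your blanket appeal to Theorem 1.4 of \cite{G3} for both $\alpha>1$ and $\alpha=1$ needs this substitution. (Your tail constant $\tfrac14 c(\alpha)p_1^{-\pow}E_\omega(h_0(\tfrac12,\omega))$ differs by a factor $2$ from the paper's $A=\tfrac12 c(\alpha)p_1^{-\pow}E_\omega(h(\tfrac12,\omega))$, since $x_n'(\omega)-\tfrac12=\tfrac12 x_n(\varphi\omega)$ already carries the $\tfrac12$; this is immaterial here because the constant $c$ in the statement is unspecified.)
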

\begin{proof}
We first notice that induced skew product $S^R:\Delta_0\to\Delta_0$ is piecewise affine and onto. In particular, it satisfies the assumptions of Aaronson-Denker \cite{AD}. Thus, it has a unique absolutely continuous invariant probability measure $\nu_{\Delta_0}$ whose density $h_{\Delta_0} \in\mathcal B$, where $\mathcal B$ is the space of Lipschitz functions on $\Delta_0$, the system $(S^R, \nu_{\Delta_0})$ is mixing and the associated transfer operator $\mathcal L_{S^R}$, with respect to $\nu_{\Delta_0}$, has a spectral gap on $\mathcal B$. The $S$-invariant measure, $\nu$, is defined using $\nu_{\Delta_0}$. The fact that the measure $\nu$ is infinite ($\sigma$-finite) follows from Theorem \ref{thm:asymptotics} since $\alpha\ge1$. Since $\nu|_{\Delta_0} = \nu_{\Delta_0}$ the normalization $\nu(\Delta_0)=1$ is automatically satisfied. To prove (2), we apply Gou\"ezel \cite{G3} Theorem 1.4. For $f\in\mathcal B$ define 
\begin{equation}\label{renewal} 
R_nf :=1_{\Delta_0} \mathcal L_{\nu}^n(1_{\{R=n\}}f),
\end{equation}
where
$\mathcal L_{\nu}$ is the transfer operator associated with $S$ with respect to the invariant measure $\nu$. Using \eqref{renewal} and $\mathcal L_{S^R}$, we get
$$\mathcal L_{S^R}(f)=\sum_{n\ge 1}\mathcal L_{\nu}^n(1_{\{R=n\}}f).$$
The spectral properties of $\mathcal L_{S^R}$ imply that $(R_n)_{n\ge1}$ is an aperiodic renewal sequence of operators (see Sarig \cite{S1} for the definition). We still need to check:\\
\begin{itemize}
\item $\nu(\{R>n\})\sim n^{-\frac{1}{\alpha}}l(n)$, where $l$ is slowly varying function;\\
\item $\exists\, C>0$ such that $||R_n||_{\text{Lip}}\le C n^{-\frac{1}{\alpha}-1}.$
\end{itemize}
The first condition follows from Lemma \ref{lem:ass_tail}. For the second one, using (8) on page 649 of \cite{S1}, $\exists C>0$ such that
$$||R_n||_{\text{Lip}}\le C \nu(\{R=n\}).$$
Thus, Lemma \ref{lem:ass_tail} completes the proof of (2). For (3) the proof is essentially the same as in (2) but using part (a) of Theorem 1.1 in \cite{MT}.
\end{proof}
\begin{remark}
When $0<\alpha <1$ and $\alpha < \beta < \infty$ we can use the arguments from Theorem \ref{thm:old} to prove that the skew product of linearized 
random transformation has a finite invariant measure over the full range of $0<\beta<\infty$, even though the $T_{\beta}$ maps have only infinite absolutely continuous invariant measures when $1 \leq \beta < \infty.$ The required bounded distortion condition is automatically satisfied for the skew product \eqref{GW}.  Asymptotic estimates as in Theorem \ref{thm:asymptotics} lead to decay of correlation results as in Theorem \ref{thm:old} and limit laws as in Theorem \ref{thm:limits}.
\end{remark}
\noindent {\bf{Acknowledgment.}} The authors thank Ian Melbourne for bringing the piecewise affine maps in reference \cite{GW} to their attention. The authors are in debt to Henk Bruin for raising a question about the linearized random system in section 4. His question prompted us to correct the definition of the skew product in equation \eqref{GW}.  
\bibliographystyle{amsplain}

\end{document}